\title[$ALH^*$ Gravitational Instantons]{The Torelli Theorem for $ALH^*$ Gravitational Instantons}
\author[T. C. Collins]{Tristan C. Collins}
 \email{tristanc@mit.edu}
  \address{Department of Mathematics, Massachusetts Institute of Technology, 77 Massachusetts Avenue, Cambridge, MA 02139}
 \thanks{T.C.C is supported in part by NSF CAREER grant DMS-1944952 and an Alfred P. Sloan Fellowship. }
 \author[A. Jacob]{Adam Jacob}
  \email{ajacob@math.ucdavis.edu}
  \address{Department of Mathematics, University of California, Davis, 1 Shields Ave, Davis, CA, 95616}
  \thanks{A.J. is supported in part by a Simons collaboration grant}
  \author[Y.-S. Lin] {Yu-Shen Lin}
   \email{yslin@bu.edu}
  \address{Department of Mathematics, Boston University, 11 Cummington Mall, Boston, MA 02215}
    \thanks{Y.-S. L. is supported in part by a Simons collaboration grant}
\theoremstyle{plain}
\newtheorem{thm}{Theorem}[section]
\newtheorem{prop}[thm]{Proposition}
\newtheorem{defn}[thm]{Definition}
\newtheorem{lem}[thm]{Lemma}
\newtheorem{cor}[thm]{Corollary}
\theoremstyle{definition}
\newtheorem{rk}[thm]{Remark}
\numberwithin{equation}{section}
\newcommand{\be}{\begin{equation}}
\newcommand{\bea}{\begin{eqnarray}}
\newcommand{\eea}{\end{eqnarray}}
 \newcommand{\ee}{\end{equation}}
\renewcommand{\leq}{\leqslant}
\renewcommand{\geq}{\geqslant}
\renewcommand{\epsilon}{\varepsilon}
\renewcommand{\phi}{\varphi}
\begin{document}

\begin{abstract}
	We give a short proof of the Torelli theorem for $ALH^*$ gravitational instantons, using the authors' previous construction of mirror special Lagrangian fibrations in del Pezzo surfaces and rational elliptic surfaces together with recent work of Sun-Zhang. 
	In particular, this includes an identification of 10 diffeomorphism types of   $ALH^*_b$ gravitational instantons.
	 \end{abstract}
\maketitle
\section{Introduction}

Gravitational instantons were introduced by Hawking \cite{Haw} as certain solutions to the classical Einstein equations. They are the building blocks of Euclidean quantum gravity and are analogous to self-dual Yang-Mills instantons arising from Yang-Mills theory. Mathematically, gravitational instantons are non-compact, complete hyperK\"ahler manifolds with $L^2$-integrable curvature tensor. Depending on the volume growth of the geometry at infinity, there are a few known classes of gravitational instantons discovered first: $ALE$, $ALF$, $ALG$, $ALH$. Here $ALE$ is the abbreviation of asymptotically locally Euclidean, $ALF$ is for asymptotically locally flat and the latter two were simply named by induction.
Later, Hein \cite{Hein} constructed new gravitational instantons with different curvature decay and volume growth on the complement of a fibre in a rational elliptic surface,   named  $ALG^*$ (corresponding to Kodaira type $I^*_b$-fibre) and $ALH^*$ (corresponding to Kodaira type $I_b$-fibre). The first class has the same volume growth as $ALG$ but with different curvature decay, while the latter has a volume growth of $r^{4/3}$.

Gravitational instantons also play an important role in differential geometry, as they arise as the blow-up limits of hyperK\"ahler metrics \cite{HSVZ, CVZ}. Recently, Sun-Zhang \cite{SZ} made use of the Cheeger-Fukaya-Gromov theory of $\mathcal{N}$-structures to prove that any non-flat gravitational instanton has a unique asymptotic cone and indeed falls into one of the families in the above list. Thus, it remains to classify the gravitational instantons in each class.
 
The classification of the gravitational instantons has a long history.  By work of Kronheimer \cite{Kro}, $ALE$ gravitational instantons always have the underlying geometry of a minimal resolution of the quotient of $\mathbb{C}^2$ by a finite subgroup of $SU(2)$. Moreover, Kronheimer established a Torelli type theorem for ALE gravitational instantons.  More recently, building on work of Minerbe \cite{Min}, Chen-Chen \cite{CC} studied gravitational instantons with curvature decay $|Rm|\leq r^{-2-\epsilon}$ for some $\epsilon>0$.  They proved such gravitational instantons must be of the class $ALE$, $ALF$, $ALG$ or $ALH$. Moreover, Chen-Chen proved that up to hyperK\"ahler rotation, $ALH$ (or $ALG$) gravitational instantons are isomorphic to the complement of a fibre with zero (or finite) monodromy in a rational elliptic surface. Very recently Chen-Viaclovsky \cite{CVZ} studied the Hodge theory of $ALG*$-gravitational instantons and then Chen-Viaclovsky \cite{CV} classified the $ALG,ALG^*$ gravitational instantons. So the remaining case is the classification of the gravitational instantons of type $ALH^*$. 
 
Examples of $ALH^*$ gravitational instantons are constructed from del Pezzo surfaces by Tian-Yau \cite{TY} and from rational elliptic surfaces by Hein \cite{Hein}. Hein observed that these two examples have the same curvature decay, injectivity radius and volume growth. The relation between the two examples was made precise by the authors \cite{CJL, CJL2}  as a by-product of their work on the Strominger-Yau-Zaslow mirror symmetry of log Calabi-Yau surfaces; in particular, it was shown that these two examples are related by a global hyperK\"ahler rotation.

The goal is this paper is to give a short proof of a Torelli theorem for $ALH^*$ gravitational instantons  using the earlier results in \cite{CJL, CJL2}, together with the recent work of Sun-Zhang \cite{SZ}.  Below we give an informal statement of the main theorem, and refer the reader to Theorem~\ref{Torelli} for a precise version. 
 \begin{thm}
     $ALH^*$ gravitational instantons are classified by the cohomology classes of their hyperK\"ahler triple. 	
 \end{thm}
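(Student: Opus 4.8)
The plan is to reduce the hyperK\"ahler Torelli statement to two essentially independent inputs: a complex-geometric Torelli theorem for the anticanonical pairs underlying $ALH^*$ instantons, and the uniqueness of the Ricci-flat metric in a fixed K\"ahler class. The organizing principle is the twistor family: a hyperK\"ahler structure $(g, I, J, K)$ on $X$ encodes the K\"ahler form $\omega_I$ together with the holomorphic symplectic form $\Omega_I = \omega_J + \sqrt{-1}\,\omega_K$, and the period data is exactly the triple $([\omega_I], [\omega_J], [\omega_K]) \in H^2(X, \mathbb{R})^{3}$. I would first invoke Sun-Zhang \cite{SZ} to pin down the asymptotic geometry: any gravitational instanton in this class genuinely has the $ALH^*$ model at infinity with its unique tangent cone. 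This fixes the behaviour at infinity needed both to compactify $X$ and to formulate the metric uniqueness below.

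Next I would recover the underlying complex geometry. Using a hyperK\"ahler rotation and the authors' earlier work \cite{CJL, CJL2}, one realizes $(X, I)$ --- for a suitable complex structure in the twistor sphere --- as the complement $Y \setminus D$ of an $I_b$ fibre $D$ in a rational elliptic surface $Y$ (equivalently, by a further rotation, as the complement of a smooth anticanonical curve in a del Pezzo surface), carrying the special Lagrangian fibration produced in \cite{CJL, CJL2}. The key claim to establish here is that the holomorphic periods, i.e. the class $[\Omega_I]$ measured against the marking, determine the isomorphism class of the pair $(Y, D)$. This is a Torelli theorem for the relevant log Calabi-Yau surfaces, which I expect to extract from the period theory of rational elliptic surfaces (the $E_8$ root lattice structure and Looijenga's description of the anticanonical pairs) together with the SYZ dictionary of \cite{CJL, CJL2} translating periods into the affine structure on the fibration base.

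With the complex structure determined, the remaining period $[\omega_I]$ is the K\"ahler class, and I would close the argument with a uniqueness statement: on a fixed $Y \setminus D$ there is at most one $ALH^*$ Ricci-flat K\"ahler metric with prescribed K\"ahler class and prescribed asymptotics, which follows from the uniqueness theory for the complex Monge-Amp\`ere equation in the Tian-Yau / Hein setting. Chaining these together gives injectivity of the period map: equal triples force the same pair $(Y, D)$, hence the same complex manifold, hence --- by fixing the K\"ahler class and applying uniqueness --- an isometry of hyperK\"ahler structures. Surjectivity onto the admissible periods, and with it the identification of the diffeomorphism types, comes from running the construction in reverse: the complex Torelli produces a pair with any prescribed holomorphic period, and the Tian-Yau/Hein existence theorem produces the Ricci-flat metric in the prescribed K\"ahler class.

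The main obstacle I anticipate is the interface between the algebraic Torelli theorem and the metric side. Concretely, one must ensure that the complex structure selected in the twistor family is the distinguished algebraic one (so that the $I_b$ fibre, rather than some other degeneration, sits at infinity), and that the marking induced by the hyperK\"ahler triple is compatible with the lattice marking used in the complex period domain; getting these to match is where the bookkeeping is delicate. The second genuine difficulty is the non-compact metric uniqueness: unlike the compact Calabi-Yau case, uniqueness of the $ALH^*$ Ricci-flat metric requires controlling the potential against the $r^{4/3}$ volume growth and the specific curvature decay, so the comparison and maximum-principle argument must be carried out with the asymptotics supplied by Sun-Zhang rather than assumed.
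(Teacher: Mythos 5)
Your overall architecture does match the paper's: Sun--Zhang exponential decay to the model end, hyperK\"ahler rotation and compactification to a rational elliptic surface with an $I_b$ fibre at infinity, a Torelli theorem for anticanonical pairs, and Monge--Amp\`ere uniqueness to recover the metric. But there are two genuine gaps at precisely the technically substantive points. First, you never construct the input to the complex Torelli theorem. The Gross--Hacking--Keel theorem the paper uses (Theorem~\ref{Tor}; your proposed Looijenga/$E_8$-lattice route would face the identical issue) requires a lattice isometry $\mu:\mathrm{Pic}(\check{Y}_1)\to\mathrm{Pic}(\check{Y}_2)$ preserving the boundary component classes $[\check{D}_i]$ and the cone $C^{++}$. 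The hypothesis only supplies a diffeomorphism $F$ of the \emph{open} surfaces matching classes in $H^2(X_i)$, and the restriction maps $H^2(\check{Y}_i)\to H^2(\check{X}_i)$ have kernel spanned by the boundary components, so $F^*$ does not automatically extend to the compactifications. The paper resolves this in Lemma~\ref{intermediate} by modifying $F$ near infinity --- using $[\partial K_1,S^1]=H^1(\partial K_1,\mathbb{Z})$, the mapping class group of the torus fibre, and commutation with the $I_b$ monodromy --- so that local sections are carried to local sections and $\tilde{F}^*$ is defined on $H^2(\check{Y}_i,\mathbb{Z})$ preserving the $[\check{D}_{i,j}]$. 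You flag this matching of markings as ``delicate bookkeeping,'' but it is an actual missing argument, not bookkeeping, and without it the period comparison has no content.

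Second, the Weyl group. Your claim that the holomorphic periods determine the isomorphism class of $(\check{Y},\check{D})$ is false as stated: the periods determine the pair only up to the Weyl group $W_{\check{Y}_1}$ generated by reflections in nodal classes, and Theorem~\ref{Tor} outputs $\mu\circ g=f^*$ for some $g\in W_{\check{Y}_1}$. If $g\neq\mathrm{id}$, the resulting biholomorphism $h$ need not agree with $F^*$ on cohomology, so $h^*[\check{\omega}_1]$ need not equal $[\check{\omega}_2]$ and your final Monge--Amp\`ere uniqueness step cannot even be launched in a fixed K\"ahler class. The paper kills $g$ using exactly the hypothesis $F^*[\omega_1]=[\omega_2]$ together with a chamber argument: distinct chambers cut out by $\alpha^{\perp}$, $\alpha\in W_{\check{Y}_1}\cdot\Delta_{\check{Y}_1}$, have disjoint images under restriction to $H^2(\check{X}_1)$, so compatibility with the K\"ahler class forces $g=\mathrm{id}$. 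In your outline $[\omega_I]$ enters only at the metric stage, so this step is simply absent. Two smaller imprecisions: the relevant uniqueness (Theorem~\ref{uniqueness}) is not ``at most one metric in a class'' but uniqueness up to a fibre-preserving automorphism (the paper's final map is $f=h\circ T_\sigma$), and its hypotheses require identifying the asymptotics with \emph{possibly non-standard} semi-flat metrics via Theorem~\ref{local model}, which the classical Tian--Yau/Hein setting does not cover; and the paper's precise Theorem~\ref{Torelli} is an injectivity statement only, so your surjectivity discussion addresses a claim the paper does not actually prove.
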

  The proof of the above theorem is similar to the  Torelli theorem for K3 surfaces, which is a consequence of the results from \cite{SS, BR, LP} and the Calabi conjecture \cite{Y}.  The proof goes as follows; using the exponential decay result of $ALH^*$ gravitational instantons to the Calabi ansatz by Sun-Zhang \cite{SZ}, an earlier argument of the authors from \cite{CJL} implies that up to hyperK\"ahler rotation any $ALH^*$ gravitational instanton can be compactified to a rational elliptic surface. The complex structure of such a rational elliptic surface is determined by Gross-Hacking-Keel's \cite{GHK} Torelli theorem for log Calabi-Yau surfaces. Theorem~\ref{Torelli}  then follows from from a local model calculation in combination with the an essentially optimal uniqueness theorem for solutions of the complex Monge-Amp\'ere equations established by the authors in \cite{CJL2}.

  The paper is organized as follow: In Section \ref{sec: pre}, we review the earlier work of the authors. This includes the construction of special Lagrangian tori via the mean curvature flow in the geometry asymptotic to the Calabi ansatz and the hyperK\"ahler rotation of the Calabi ansatz, as well as a uniqueness theorem for Ricci-flat metrics on the complement of an $I_b$-fibre in a rational elliptic surface. In Section \ref{sec: tor}, we first recall the result of Sun-Zhang \cite{SZ} on $ALH^*$ gravitational instantons and provide a short proof of the Torelli theorem based on the results reviewed in Section \ref{sec: pre}.   \\
  \\
{\bf Acknowledgements}: The third author would like to thank X. Zhu  for some related discussions. The authors are grateful to S.-T. Yau for his interest and encouragement and the AIM workshop on Stability in Mirror Symmetry for providing a good (virtual) environment for discussion.

\section{Previous results} \label{sec: pre}
\subsection{
Ansatz Special Lagrangians and their HyperK\"ahler Rotations} \label{sec: Calabi}
We begin by reviewing the Calabi ansatz.  Let $D=\mathbb{C}/\mathbb{Z}\oplus \mathbb{Z}\tau$ be an elliptic curve with $\mbox{Im}\tau>0$, and $\omega_D$ a flat metric on $D$. For a fixed $b\in \mathbb{N}$, let $L$ be a degree $b$ line bundle over $D$, and denote by $Y_{\mathcal{C}}$ the total space of $L$ with projection $\pi_{\mathcal{C}}:Y_{\mathcal{C}}\rightarrow D$. Let $X_{\mathcal{C}}$ be the complement of the zero section in $Y_{\mathcal{C}}$. Choose $h$ to be the unique hermitian metric on $L$ with curvature given by $\omega_D$ with $\int_D \omega_D=2\pi b$. If we let $z$ be the coordinate on $D$ and $\xi$ a local trivialization of $L$, we get coordinates on $L$ via $(z,w)\mapsto(z,w\xi)$. The Calabi ansatz is then given by 
 \begin{align*}
    \omega_{\mathcal{C}}=\sqrt{-1}\partial \bar{\partial}\frac{2}{3}\big( -\log{|\xi|^2_h} \big)^{\frac{2}{3}},\qquad \Omega_{\mathcal{C}}=\frac{f(z)}{w}dz\wedge dw,
 \end{align*} for a given holomorphic function $f(z)$ such that 
 \begin{align*}
 \frac{i}{2}\int_{E} \frac{Res_{E}\Omega_{\mathcal{C}}}{2\pi i}\wedge \overline{\bigg(\frac{Res_E{\Omega}_{\mathcal{C}}}{2\pi i}\bigg)}=2\pi b.
 \end{align*}
  It is straightforward to check that $(\omega_{\mathcal{C}},\Omega_{\mathcal{C}})$ is a hyperK\"ahler triple, i.e $2\omega_{\mathcal{C}}^2=\Omega_{\mathcal{C}}\wedge \bar{\Omega}_{\mathcal{C}}$. The induced Riemannian metric is complete but not of bounded geometry. Specifically, if $r$ denotes the distance function to a fixed point, then as one travels towards the zero section the curvature and the injectivity radius have the following behavior: $$|Rm|\sim r^{-2}\qquad\qquad inj\sim r^{-\frac{1}{3}}.$$

Let $\underline{L}$ be a special Lagrangian in $D$ with respect to $(\omega_D,\Omega_D)$, where $\Omega_D$ is a holomorphic volume form on $D$ such that $\omega_D=\frac{i}{2}\Omega_D\wedge \bar{\Omega}_D$. 
 A straightforward calculation shows that 
   \begin{align*}
      L_{\mathcal{C}}=\pi_{\mathcal{C}}^{-1}(\underline{L})\cap \{|\xi|^2_h=\epsilon\} 
   \end{align*}   is a special Lagrangian submanifold of $(X_{\mathcal{C}},\omega_{\mathcal{C}},\Omega_{\mathcal{C}})$. We call $L_{\mathcal{C}}$ an ansatz special Lagrangian. In particular, a special Lagrangian fibration in $D$ induces a special Lagrangian fibration in $X_{\mathcal{C}}$ and by direct calculation, the monodromy of such fibration is conjugate to $\begin{pmatrix}  1 & b \\ 0 & 1 \end{pmatrix}$. The middle homology $H_2(X_{\mathcal{C}},\mathbb{Z})\cong \mathbb{Z}^2$ is generated by $[L_{\mathcal{C}}],[L'_{\mathcal{C}}]$, where $\underline{L},\underline{L}'$ are any pair such that  $[\underline{L}],[\underline{L}']$ generates $H_1(D,\mathbb{Z})$.
   
   Since in complex dimension two all Ricci-flat K\"ahler metrics are hyperK\"ahler, one can preform a hyperK\"ahler rotation and arrive at $\check{X}_{mod}$, with a K\"ahler form  $\check{\omega}_{\mathcal C}$ and holomorphic volume form $\check{\Omega}_{\mathcal C}$, which has the same underlying space as $X_{\mathcal{C}}$. By choosing the hyperK\"ahler rotation appropriately, the special Lagrangian fibration near infinity in $X_{\mathcal{C}}$ becomes an elliptic fibration $\check{X}_{mod}\rightarrow \Delta^*$ over a punctured disc $\Delta^*$. The monodromy of the fibration implies that, after a choice of section $\sigma:\Delta^*\rightarrow \check{X}_{mod}$, the space $X_{mod}$ is biholomorphic to 
   \begin{align*}
       \Delta^*\times \mathbb{C}/\Lambda(u), \mbox{ where } \Lambda(u)=\mathbb{Z}\oplus \mathbb{Z}\frac{b}{2\pi i}\log{u}.
   \end{align*} Here we will use  $u$ for the complex coordinate of the disc and $v$ for the fibre. See \cite[Appendix A]{CJL2}. There is a natural partial compactification $\check{Y}_{mod}\rightarrow \Delta$ by adding an $I_b$ fibre over the origin of $\Delta$.

  Before we identify $\check{\omega}_{\mathcal C}$ and $\check{\Omega}_{\mathcal C}$, we recall the {\it standard semi-flat metric} on $\check{X}_{mod}$, written down in \cite{GSVY}:
       \begin{align*}
          \omega_{sf,\epsilon} &:=  \sqrt{-1}|\kappa(u)|^2 \frac{k|\log|u||}{2\pi\epsilon}\frac{du\wedge d\bar{u}}{|u|^2} \\
          &\quad + \frac{\sqrt{-1}}{2} \frac{2\pi\epsilon}{k|\log|u||} \left(dv+B(u,v)du\right)\wedge \overline{\left(dv+B(u,v)du\right)},
            \end{align*} 
          where $B(u,v) = -\frac{{\rm Im}(v)}{\sqrt{-1}u|\log|u||}$. A straightforward calculation shows that 
          \begin{enumerate}
          	\item $\epsilon$ is the size of the fibre with respect to $\omega_{sf}$. 
          	\item $\omega_{sf}$ is flat along the fibres. 
          	\item $(\omega_{sf},\Omega_{sf})$ form a hyperK\"ahler triple, where $\Omega_{sf}=\frac{\kappa(u)}{u}dv\wedge du$ is the unique volume form such that $\int_{C}\Omega_{sf}=1$, and $C$ is the $2$-cycle represented by $\{|u|=const, \mbox{Im}(x)=0 \}$. 
          \end{enumerate}
     The cycle $C$ is called a ``bad cycle" by Hein  \cite{Hein},\footnote{It is worth noting that the definition of the bad cycle actually implicitly depends on a choice of a section $\sigma:\Delta^*\rightarrow \check{X}_{mod}$. We refer the reader to \cite{CJL2} for more details on (quasi)-bad cycles.} and this notion is refined by the authors in \cite{CJL2}. It is easy to see that $H_2(\check{X}_{mod},\mathbb{Z})$ is freely generated by the fibre class and $C$; we therefore define
     
     \begin{defn}
     A cycle $C' \subset X_{mod}$ is called a quasi-bad cycle if the homology class $[C'] \in H_2(\check{X}_{mod},\mathbb{Z})$ can be written as $m[C]+[F]$ where $[F]$ is the fiber class.
     \end{defn}
     
     It was observed by Hein that the semi-flat metric has the same asymptotic behavior for $|Rm|$ and $inj$ as the Calabi ansatz \cite{Hein}. This motivates the natural guess that the hyperK\"ahler rotation of the Calabi ansatz would give the semi-flat metric. However, there is a certain subtle discrepancy, and one must first introduce a class of non-standard semi-flat metrics, as defined in \cite{CJL2}. For any $b_0\in \mathbb{R}$, we define the {\it non-standard semi-flat metric} as
      \begin{align*}
     \omega_{sf, b_0, \epsilon} &:= \sqrt{-1}\frac{|\kappa(u)|^2}{\epsilon} W^{-1}\frac{du\wedge d\bar{u}}{|u|^2}\\
     &\quad + \frac{\sqrt{-1}}{2} W \epsilon \left(dv+\widetilde{\Gamma}(v,u,b_0) du\right) \wedge \overline{\left(dv+\widetilde{\Gamma}(u,v,b_0) du\right)},
     \end{align*}
     where $W= \frac{2\pi}{k|\log|u||}$ and $
     \widetilde{\Gamma}(u,v,b_0) = B(u,v)+ \frac{b_0}{2\pi^2}\frac{|\log|u||}{u}$.  An appealing way to think of non-standard semi-flat metrics is that they are obtained from standard semi-flat metrics by pulling back along the fiberwise translation map defined by a multi-valued (possibly uncountably valued) section $\sigma:\Delta^*\rightarrow \check{X}_{mod}$; see \cite{CJL2}.   A non-standard semi-flat metric has the same curvature and injectivity radius decay as a standard semi-flat metric. However, if $\frac{2b_0}{b}\notin \mathbb{Z}$, then the cohomology class of the non-standard semi-flat metric cannot be realized by a standard semi-flat metric. We now state the following result. 
     
   \begin{thm}\cite[Appendix A]{CJL2} \label{local model} Assume that $D\cong \mathbb{C}/\mathbb{Z}\oplus \mathbb{Z}\tau$ is an elliptic curve, with $\tau$ in the upper half-plane.  Let $Y_{\mathcal{C}}$ be the total space of a degree $b$ line bundle $L$ over $D$, and  $X_{\mathcal{C}}$ the complement of the zero section. Let $\omega_{\mathcal{C}}$ and  $\Omega_{\mathcal{C}}$ be the forms arising from the Calabi ansatz on $X_{\mathcal{C}}$, as above. Consider the hyperK\"ahler rotation of $X_{\mathcal{C}}$ with K\"ahler form  $\check{\omega}_{\mathcal{C}}$ and holomorphic volume form $\check{\Omega}_{\mathcal{C}}$ such that the ansatz special Lagrangian corresponding to $1\in \mathbb{Z}\oplus \mathbb{Z}\tau$ is of phase zero. Then with suitable choice of coordinate one has 
   	  \begin{align*}
   	     \check{\omega}_{\mathcal{C}}=\alpha\omega_{sf,b_0,\epsilon}, \hspace{5mm} \check{\Omega}_{\mathcal{C}}=\alpha\Omega_{sf},
   	  \end{align*} 
	  where $b_0=-\frac{1}{2}\mbox{Re}(\tau)b$, $\epsilon=\frac{2\sqrt{2}\pi }{\mbox{Im}(\tau)}$ and $\alpha=\sqrt{b\pi \mbox{Im}(\tau)}$. In particular, there exists a bijection between $\tau \leftrightarrow (b_0,\epsilon)$, i.e., every (possibly non-standard) semi-flat metric can be realized as some hyperK\"ahler rotation of certain Calabi ansatz up to a scaling. 
   	 \end{thm}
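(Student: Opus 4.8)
The plan is to reduce the statement to an explicit computation, by first pinning down which hyperK\"ahler rotation is being taken and then matching coefficients in suitable holomorphic coordinates. Since we are in real dimension four, a rotation is the choice of a new complex structure from the two-sphere $\{aI+bJ+cK\}$, which amounts to expressing $(\check\omega_{\mathcal C},\check\Omega_{\mathcal C})$ as explicit real-linear combinations of the members of the Calabi triple $(\omega_{\mathcal C},\operatorname{Re}\Omega_{\mathcal C},\operatorname{Im}\Omega_{\mathcal C})$. The normalization that the ansatz special Lagrangian $L_{\mathcal C}$ attached to $1\in\mathbb{Z}\oplus\mathbb{Z}\tau$ has phase zero forces $\omega_{\mathcal C}|_{L_{\mathcal C}}=0$ and $\operatorname{Im}\Omega_{\mathcal C}|_{L_{\mathcal C}}=0$, with $\operatorname{Re}\Omega_{\mathcal C}$ serving as the calibration. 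Hence the rotation turning the special Lagrangian fibration into a holomorphic elliptic fibration is the cyclic one $\check\omega_{\mathcal C}=\operatorname{Re}\Omega_{\mathcal C}$ and $\check\Omega_{\mathcal C}=\operatorname{Im}\Omega_{\mathcal C}+\sqrt{-1}\,\omega_{\mathcal C}$; one checks this permutation of the triple lies in $SO(3)$, so the hyperK\"ahler relations are automatically preserved, and by construction each $L_{\mathcal C}$ becomes a holomorphic curve for the new structure.

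Next I would write everything out in the fiber coordinate. Setting $t=-\log|\xi|_h^2$ and using $h(\xi,\xi)=e^{-\psi}$ with $\ddb\psi=\omega_D$, one has $t=-2\log|w|+\psi(z)$, so that $\omega_{\mathcal C}=\ddb\tfrac23 t^{2/3}$ while $\Omega_{\mathcal C}=\tfrac{f(z)}{w}dz\wedge dw$; the phase-zero normalization fixes $\Omega_D=dz$ up to a positive real scale, hence fixes $f$. I would then produce the holomorphic coordinates $(u,v)$ adapted to $\check\Omega_{\mathcal C}$: the base coordinate $u$ on $\Delta^*$ is built from $t$ together with the position of $\underline L$ in $D$ along the special Lagrangian direction, and $v$ is the fiber coordinate of $\mathbb{C}/\Lambda(u)$ with $\Lambda(u)=\mathbb{Z}\oplus\mathbb{Z}\tfrac{b}{2\pi i}\log u$. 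Granting the biholomorphism $X_{mod}\cong\Delta^*\times\mathbb{C}/\Lambda(u)$ recalled above, $\check\Omega_{\mathcal C}$ is a holomorphic two-form with a simple pole along the $I_b$ fiber, so it must be a constant multiple of $\tfrac{\kappa(u)}{u}dv\wedge du=\Omega_{sf}$; evaluating the period over the bad cycle $C$, normalized by $\int_C\Omega_{sf}=1$, reads off $\alpha=\sqrt{b\pi\operatorname{Im}\tau}$.

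It then remains to match the K\"ahler forms. Expanding $\check\omega_{\mathcal C}=\operatorname{Re}\Omega_{\mathcal C}$ in the coordinates $(u,v)$ and comparing with $\alpha\,\omega_{sf,b_0,\epsilon}$, the $\tfrac{du\wedge d\bar u}{|u|^2}$ and $dv\wedge d\bar v$ coefficients fix the fiber size $\epsilon$ and the conformal factor $W=\tfrac{2\pi}{k|\log|u||}$, yielding $\epsilon=\tfrac{2\sqrt2\,\pi}{\operatorname{Im}\tau}$; the compatibility $2\check\omega_{\mathcal C}^2=\check\Omega_{\mathcal C}\wedge\overline{\check\Omega_{\mathcal C}}$ serves as a consistency check on the shape. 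The cross terms $du\wedge d\bar v$ reproduce the connection one-form, and the key point is that $\Lambda(u)$ is non-rectangular precisely when $\operatorname{Re}\tau\neq0$: tracking this shift shows that the twist is not $B(u,v)$ but $\widetilde\Gamma(u,v,b_0)=B(u,v)+\tfrac{b_0}{2\pi^2}\tfrac{|\log|u||}{u}$, and equating the extra piece with the contribution of $\operatorname{Re}\tau$ gives $b_0=-\tfrac12\operatorname{Re}(\tau)\,b$. Finally the asserted bijection $\tau\leftrightarrow(b_0,\epsilon)$ is immediate: for fixed $b$ one inverts to get $\operatorname{Im}\tau=\tfrac{2\sqrt2\,\pi}{\epsilon}$ and $\operatorname{Re}\tau=-\tfrac{2b_0}{b}$.

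The main obstacle is the step producing $b_0$. The constants $\alpha$ and $\epsilon$ come from comparing a single period and the diagonal entries of the metric, which is essentially bookkeeping; but obtaining the precise value $b_0=-\tfrac12\operatorname{Re}(\tau)b$ requires identifying the correct, genuinely multivalued fiber coordinate $v$ coming from the special Lagrangian developing map, and then carefully propagating the effect of $\operatorname{Re}\tau$ through the monodromy into the cross terms. This is exactly the subtlety that distinguishes the non-standard semi-flat metric from the standard one, and it is where the argument must be carried out by direct calculation rather than by soft reasoning.
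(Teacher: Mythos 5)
You should first note a structural point: this paper never proves Theorem~\ref{local model} --- it is quoted from \cite[Appendix A]{CJL2}, where the proof is a direct computation: one constructs explicit holomorphic coordinates $(u,v)$ for the rotated complex structure out of the Calabi ansatz data and reads off $\check{\omega}_{\mathcal{C}}$ and $\check{\Omega}_{\mathcal{C}}$ in semi-flat normal form. Your outline points in exactly that direction, and several ingredients are sound: the cyclic rotation $\check{\omega}_{\mathcal{C}}=\operatorname{Re}\Omega_{\mathcal{C}}$, $\check{\Omega}_{\mathcal{C}}=\operatorname{Im}\Omega_{\mathcal{C}}+\sqrt{-1}\,\omega_{\mathcal{C}}$ is indeed in $SO(3)$ and makes the phase-zero special Lagrangian fibration holomorphic (though it differs from the convention \eqref{HKrel} used in this paper by an overall factor of $\sqrt{-1}$ on $\check{\Omega}$, which you must track when you assert that the period over the bad cycle $C$ produces a \emph{positive real} $\alpha$); and the final inversion $\operatorname{Im}\tau=\frac{2\sqrt{2}\pi}{\epsilon}$, $\operatorname{Re}\tau=-\frac{2b_0}{b}$ giving the bijection is immediate once the three formulas are in hand.

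The genuine gap is that the computation which \emph{is} the theorem is never performed. All three constants $\alpha$, $\epsilon$, $b_0$ depend on actually exhibiting $(u,v)$ as holomorphic functions for the rotated structure, and you describe this only impressionistically (``built from $t$ together with the position of $\underline{L}$''); this action-angle construction is precisely the content of \cite[Appendix A]{CJL2} and cannot be bypassed. Two of your soft steps fail as stated. First, ``$\check{\Omega}_{\mathcal{C}}$ is holomorphic with a simple pole along the $I_b$ fiber, so it must be a constant multiple of $\Omega_{sf}$'' is not a valid inference on this noncompact model: every form $\frac{\kappa(u)}{u}\,dv\wedge du$ with $\kappa$ holomorphic and $\kappa(0)\neq 0$ has a simple pole along the compactifying fiber, and the normalization $\int_C\Omega_{sf}=1$ only pins down $\kappa(0)$ (the period over $\{|u|=c\}$ sees only the residue), so constancy of $\kappa$ for the rotated Calabi ansatz requires either the explicit computation or an additional argument, e.g.\ invariance of $(\omega_{\mathcal{C}},\Omega_{\mathcal{C}})$ under the fiberwise $S^1$-action of the Calabi model, which you do not make. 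Second, you yourself concede that the derivation of $b_0=-\frac{1}{2}\operatorname{Re}(\tau)b$ --- identifying the genuinely multivalued fiber coordinate and propagating $\operatorname{Re}\tau$ through the monodromy into the cross term $\widetilde{\Gamma}$ --- ``must be carried out by direct calculation,'' and the proposal does not carry it out. Since the non-standard twist is exactly what distinguishes this theorem from the familiar standard semi-flat identification, deferring that step leaves the statement unproven: what you have is the correct strategy, consistent with the cited proof, with the decisive coordinate construction, period evaluation, and coefficient matching still to be done.
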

    
    As direct consequence we get the following special Lagrangian fibrations in $\check{X}_{mod}$ via hyperK\"ahler rotations from the Calabi ansatz:
    \begin{lem}
    	 Fix an $m$-quasi bad cycle class $[L]\in H_2(\check{X}_{mod},\mathbb{Z})$ which is primitive. There exists a special Lagrangian fibration in $\check{X}_{mod}$ with respect to the semi-flat hyperK\"ahler triple $(\omega_{sf,b_0,\epsilon},\Omega_{sf})$ if and only if $\int_{[L]}\omega_{sf,b_0,\epsilon}=0$. 
    \end{lem}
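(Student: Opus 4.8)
My plan is to treat the two implications separately, the ``only if'' being formal and the substance lying in the ``if'' direction, which I will settle by trading the special Lagrangian condition for a holomorphic one and appealing to Theorem~\ref{local model}. For the ``only if'' direction, if $\{L_t\}$ is a special Lagrangian fibration with fiber class $[L]$ then each fiber is Lagrangian, so $\omega_{sf,b_0,\epsilon}|_{L_t}=0$ and therefore $\int_{[L]}\omega_{sf,b_0,\epsilon}=0$.

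For the converse, suppose $[L]=m[C]+[F]$ is primitive with $\int_{[L]}\omega_{sf,b_0,\epsilon}=0$. From $\int_{[F]}\Omega_{sf}=0$ and $\int_{[C]}\Omega_{sf}=1$ I compute $\int_{[L]}\Omega_{sf}=m$, and since $\int_{[F]}\omega_{sf,b_0,\epsilon}=\epsilon$ I find that $m\neq 0$, for otherwise $[L]=[F]$ would force $\epsilon=0$. Replacing $[L]$ by $-[L]$ if needed, I may assume $m>0$. Writing the hyperK\"ahler triple as $(\omega_1,\omega_2,\omega_3)=(\omega_{sf,b_0,\epsilon},\mathrm{Re}\,\Omega_{sf},\mathrm{Im}\,\Omega_{sf})$, the hypotheses read $\int_{[L]}\omega_1=0$ and $\int_{[L]}\omega_3=\mathrm{Im}(m)=0$, while $\int_{[L]}\omega_2=\mathrm{Re}(m)=m>0$.

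These are exactly the conditions that become holomorphicity after a hyperK\"ahler rotation. I pass to the complex structure $J$ with K\"ahler form $\omega_2$ and holomorphic volume form $\Omega_J=\omega_3+\sqrt{-1}\,\omega_1$. Then $\int_{[L]}\omega_J=m>0$ and $\int_{[L]}\Omega_J=\int_{[L]}\omega_3+\sqrt{-1}\int_{[L]}\omega_1=0$, so $[L]$ is a primitive $J$-holomorphic class of positive area. Moreover a submanifold is $J$-holomorphic in class $[L]$ if and only if $\omega_1$ and $\omega_3$ vanish on it while $\omega_2$ is positive, which is precisely the condition that it be a phase-zero special Lagrangian of the original triple. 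Thus it suffices to produce a $J$-holomorphic elliptic fibration of $\check X_{mod}$ with fiber class $[L]$; rotating from $J$ back to the original complex structure then yields the desired special Lagrangian fibration.

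To produce the $J$-holomorphic fibration I claim that, with respect to the elliptic fibration in which $[L]$ is the fiber class, the triple $(\omega_J,\Omega_J)$ is again semi-flat, with $[L]$ playing the role of $[F]$. Granting this, Theorem~\ref{local model} identifies $(\omega_J,\Omega_J)$ up to the scaling $\alpha$ with the hyperK\"ahler rotation of a Calabi ansatz for some modulus $\tau'\leftrightarrow(b_0',\epsilon')$, and the ansatz special Lagrangian fibration of that Calabi model in the direction $1$ is carried by the rotation to the $J$-holomorphic elliptic fibration with fiber $[L]$, completing the argument. The main obstacle is establishing this semi-flatness claim: concretely, that the hyperK\"ahler rotation sending the quasi-bad class $[L]=m[C]+[F]$ to a holomorphic class lands the metric back in the family of Theorem~\ref{local model} relative to the $[L]$-fibration. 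This amounts to the invariance of the semi-flat metric under the $SL(2,\mathbb{Z})$ change of fibration $\begin{pmatrix}1&0\\m&1\end{pmatrix}$ carrying $[F]$ to $[L]$, with the normalization $\int_{[L]}\omega_{sf,b_0,\epsilon}=0$ providing exactly the relation among $(m,b_0,\epsilon)$ needed to place the rotated triple at the semi-flat locus. I expect to verify this by a direct computation in the coordinates $(u,v)$, tracking the action of this change of basis on the periods of $\omega_{sf,b_0,\epsilon}$ and $\Omega_{sf}$ and on the defining parameters $(b_0,\epsilon)$.
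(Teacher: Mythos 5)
Your setup is correct and captures the right mechanism: the ``only if'' direction, the period bookkeeping ($\int_{[F]}\Omega_{sf}=0$, $\int_{[C]}\Omega_{sf}=1$, hence $\int_{[L]}\Omega_{sf}=m$, and $m\neq 0$ since $\int_{[F]}\omega_{sf,b_0,\epsilon}=\epsilon>0$), the choice of the cyclic rotation $(\omega_J,\Omega_J)=(\mathrm{Re}\,\Omega_{sf},\,\mathrm{Im}\,\Omega_{sf}+\sqrt{-1}\,\omega_{sf,b_0,\epsilon})$, and the equivalence between phase-zero special Lagrangians of the original triple and $J$-holomorphic curves are all sound, and they correctly reduce the lemma to producing a $J$-holomorphic elliptic fibration in the class $[L]$. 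But the proof stops exactly at the step that carries all the content: the ``semi-flatness claim'' is only asserted, with the verification explicitly deferred (``I expect to verify this by a direct computation''). Worse, as phrased the claim is circular --- ``semi-flat with respect to the elliptic fibration in which $[L]$ is the fiber class'' presupposes the very fibration whose existence is the point, and the $SL(2,\mathbb{Z})$ change of basis on $H_2(\check{X}_{mod},\mathbb{Z})$ carrying $[F]$ to $[L]$ is not induced by any isometry or fibration-preserving map you have in hand, so there is nothing yet to compute with.

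The paper closes this gap by using the other half of Theorem~\ref{local model}, namely the bijection $\tau \leftrightarrow (b_0,\epsilon)$: the \emph{given} semi-flat triple $(\omega_{sf,b_0,\epsilon},\Omega_{sf})$ is, up to the scale $\alpha$, the hyperK\"ahler rotation of a Calabi ansatz $X_{\mathcal{C}}$, and $X_{\mathcal{C}}$ already carries ansatz special Lagrangian fibrations in \emph{every} primitive direction of $H_1(D,\mathbb{Z})\cong H_2(\check{X}_{mod},\mathbb{Z})$. Taking the direction corresponding to $[L]$, each ansatz torus $L_v$ satisfies $\int_{L_v}\omega_{\mathcal{C}}=0$ and has constant phase $\theta_v$, so its image in $\check{X}_{mod}$ satisfies $\omega_{sf,b_0,\epsilon}|_{L_v}=\cos(\theta_v)\,\mathrm{vol}$; it is therefore Lagrangian if and only if its $\omega_{sf,b_0,\epsilon}$-period vanishes, which is precisely the hypothesis $\int_{[L]}\omega_{sf,b_0,\epsilon}=0$, and constancy of the phase (indeed phase $0$ or $\pi$, as the $\Omega_{sf}$-period is the real number $m$) then makes the whole fibration special Lagrangian. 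Equivalently, composing your rotation with the one from Theorem~\ref{local model} exhibits your sought $J$-holomorphic fibration as the image of an ansatz fibration of the Calabi model --- no re-derivation of the semi-flat form after rotation is needed. This is exactly why the paper can state the lemma as a direct consequence of Theorem~\ref{local model}; to complete your argument you should replace the unproven semi-flatness computation by this appeal to the ansatz fibrations already present in the Calabi model.
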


\subsection{A uniqueness theorem for Ricci-flat metrics on non-compact Calabi-Yau surfaces}

Recall that a rational elliptic surface is a rational surface with an elliptic fibration structure.
Using the standard semi-flat metric as an asymptotic model, Hein \cite{Hein} constructed many Ricci-flat metrics on the complement of a fiber in a rational elliptic surface. In the case that the removed fibre is of Kodaira type $I_b$, the authors established the uniqueness of these metrics, as well as the existence of a parameter space. We recall the setup here.

 Let $\check{Y}$ be a rational elliptic surface and $\check{D}$ an $I_b$-fibre. Fix a meromorphic form $\check{\Omega}$ with a simple pole along $\check{D}$. Denote $\check{X}=\check{Y}\setminus \check{D}$, and let  $\mathcal{K}_{dR,\check{X}}$ be the set of de Rham cohomology classes which can be represented by K\"ahler forms on $\check{X}$. Then $\mathcal{K}_{dR,\check{X}}$ is a cone in $H^2(\check{X},\mathbb{R})$. With slight modification of the work of Hein \cite{Hein}, the authors generalized the existence theorem:
\begin{thm}\cite[Theorem 2.16]{CJL2} \label{Hein metric}
   Given any $[\check{\omega}]\in \mathcal{K}_{dR,\check{X}}$, there exists $\alpha_0$ such that for $\alpha>\alpha_0$ there exists a Ricci-flat metric $\check{\omega}\in [\check{\omega}]$ on $\check{X}$ with a suitable choice of section, and a semi-flat metric $\omega_{sf,b_0,\epsilon}$ such that
     \begin{enumerate}
     	\item $\check{\omega}^2=\alpha \check{\Omega}\wedge \bar{\check{\Omega}}$, i.e. $\check{\omega}$ solves the Monge-Amp\'ere equation.  
     	\item The curvature $\check{\omega}$ satisfies $|\nabla^kRm|_{\check{\omega}} \lesssim r^{-2-k}$ for every $k\in \mathbb{N}$. 
     	\item $\check{\omega}$ is asymptotic to the semi-flat metric in the following sense: there exists $C>0$ such that for every $k\in \mathbb{N}$, one has 
     	  \begin{align*}
     	     |\nabla^k (\check{\omega}-\omega_{sf,b_0,\epsilon})|_{\check{\omega}}\sim O(e^{-Cr^{2/3}}).
     	  \end{align*}
     	
     \end{enumerate}
      
\end{thm}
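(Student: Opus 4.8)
The plan is to produce the Ricci-flat metric as the solution of a complex Monge-Amp\`ere equation on the open surface $\check X$, following the Tian-Yau--Hein strategy in which the semi-flat metric plays the role of the asymptotic model at infinity, and then to upgrade the solution to the stated polynomial curvature decay and exponential asymptotics by elliptic bootstrapping.

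First I would fix the model at infinity. Near the removed $I_b$-fibre $\check D$, the surface $\check X$ is biholomorphic to the model neighborhood $\check X_{mod}$ of Section~\ref{sec: Calabi}, and the meromorphic form $\check\Omega$ matches $\Omega_{sf}$ there. Given $[\check\omega]\in\mathcal{K}_{dR,\check{X}}$, I would use the bijection $\tau\leftrightarrow(b_0,\epsilon)$ of Theorem~\ref{local model} to select a section $\sigma$ and parameters $(b_0,\epsilon)$ so that the restriction of $\omega_{sf,b_0,\epsilon}$ to a neighborhood of infinity represents the restriction of $[\check\omega]$; this is exactly the point where the non-standard semi-flat metrics are indispensable, since for $2b_0/b\notin\mathbb Z$ the asymptotic class cannot be captured by a standard semi-flat metric. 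Gluing $\omega_{sf,b_0,\epsilon}$ near infinity to a fixed K\"ahler representative on the compact part yields a background form $\omega_0\in[\check\omega]$ which, because $\omega_{sf,b_0,\epsilon}$ is genuinely hyperK\"ahler near $\check D$, fails to solve $\omega_0^2=\alpha\,\check\Omega\wedge\overline{\check\Omega}$ only by an error that is compactly supported up to an $O(e^{-Cr^{2/3}})$ tail.

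I would then solve $(\omega_0+\ddb\varphi)^2=e^{F}\omega_0^2=\alpha\,\check\Omega\wedge\overline{\check\Omega}$ for a potential $\varphi$ decaying at infinity, where $F$ is the (exponentially decaying) right-hand side error. The substantive work is the a priori estimates on the non-compact surface, whose geometry degenerates at the puncture with $|Rm|\sim r^{-2}$, collapsing injectivity radius $\mathrm{inj}\sim r^{-1/3}$, and volume growth $r^{4/3}$. Here I would invoke Hein's weighted framework: a Sobolev inequality adapted to the $r^{4/3}$ growth, a weighted $C^0$-bound obtained by Moser iteration against the polynomial volume measure, and then the Yau second-order estimate followed by the Evans--Krylov and Schauder theory to close the argument in weighted H\"older spaces, solving the equation by a continuity method (equivalently the contraction scheme of Hein). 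The hypothesis $\alpha>\alpha_0$ enters precisely to guarantee that the glued model is close enough to Ricci-flat, and that the weighted norms along the continuity path stay uniformly controlled, so that the set of solvable parameters is open and closed.

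With $\check\omega=\omega_0+\ddb\varphi$ in hand, conclusion (2) follows from interior Schauder estimates together with the decay of $F$ and the already-controlled geometry. For the exponential asymptotics (3) I would write $\check\omega-\omega_{sf,b_0,\epsilon}=\ddb\psi$ near infinity and linearize the Monge-Amp\`ere equation; the increment then satisfies an elliptic equation whose model operator, the Laplacian of the Calabi ansatz, has a spectral gap once the non-decaying modes have been removed by the matching of the cohomology class. Feeding the polynomial decay from the a priori estimates into this gap forces $\psi$, and hence all derivatives of $\check\omega-\omega_{sf,b_0,\epsilon}$, to decay like $e^{-Cr^{2/3}}$. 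The step I expect to be the main obstacle is the a priori $C^0$ and $C^2$ estimates in this borderline volume-growth regime: the $r^{4/3}$ growth sits at the edge of what Moser/Sobolev techniques tolerate, and maintaining uniform control as the injectivity radius collapses near $\check D$ is precisely the technical core that Hein's weighted analysis is designed to supply.
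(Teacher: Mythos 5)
This paper does not prove Theorem~\ref{Hein metric} itself; it is quoted from \cite[Theorem 2.16]{CJL2}, which is a slight modification of Hein's construction \cite{Hein}, and your outline---matching the de Rham class at infinity by a possibly non-standard semi-flat model $\omega_{sf,b_0,\epsilon}$ via the $(b_0,\epsilon)$ parameters, gluing to a background form in the class, solving the resulting complex Monge-Amp\`ere equation with exponentially decaying right-hand side by Hein's weighted Tian--Yau analysis adapted to the $r^{4/3}$ volume growth and collapsing injectivity radius, then bootstrapping to the curvature decay and the $e^{-Cr^{2/3}}$ asymptotics---is precisely the strategy of that proof. You also correctly isolate the genuinely new ingredient of \cite{CJL2} over \cite{Hein}, namely that non-standard semi-flat models (with $2b_0/b\notin\mathbb{Z}$) are indispensable for exhausting all of $\mathcal{K}_{dR,\check{X}}$, so your proposal is essentially the same approach as the actual proof.
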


We refer the readers to \cite[Remark 2.17]{CJL2} for a description of some (minor) differences between Theorem~\ref{Hein metric} and the work of Hein.  The authors then proved an essentially optimal uniqueness theorem for Ricci-flat metrics with polynomial decay to a (possibly non-standard) semi-flat metrics on $\check{X}$. 
\begin{thm}\cite[Proposition 4.8]{CJL2}\label{uniqueness}
	Suppose $\check{\omega}_1, \check{\omega}_2$ are two complete Calabi-Yau metrics on $\check{X}= \check{Y}\setminus \check{D}$ with the following properties
	\begin{itemize}
		\item[$(i)$] $\check{\omega}_i^2= \alpha^2 \check{\Omega} \wedge \overline{\check{\Omega}}$, for $i=1,2$, 
		\item[$(ii)$] $[\check{\omega}_1]_{dR} = [\check{\omega}_2]_{dR} \in H^{2}_{dR}(\check{X},\mathbb{R})$,
		\item[$(iii)$] There are (possibly non-standard) semi-flat metrics $\omega_{sf, \sigma_i, b_{0,i}, \epsilon_i}$ such that
		\[
		[\omega_{sf, \sigma_i, b_{0,i}, \epsilon_i}]_{BC}=[\check{\omega}_i]_{BC} \in H^{1,1}_{BC}(\check{X}_{\Delta^*}, \mathbb{R})
		\]
		and
		\[
		|\check{\omega}_i-\alpha \omega_{sf, \sigma_i, b_{0,i}, \frac{\epsilon_i}{\alpha}}| \leq Cr_i^{-4/3}
		\]
		where $r_i$ is the distance from a fixed point with respect to $\check{\omega}_i$.
	\end{itemize}
	Then there is a fiber preserving holomorphic map $\Phi \in {\rm Aut}_0(X,\mathbb{C})$ such that $\Phi^*\check{\omega}_2=\check{\omega}_1$. 
\end{thm}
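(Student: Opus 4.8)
The plan is to reduce the statement to a Liouville-type theorem for a single linear elliptic equation. First I would use hypotheses $(i)$ and $(ii)$ to linearize the Monge--Amp\`ere equation. By $(i)$ the two metrics solve the same complex Monge--Amp\`ere equation, so $\check\omega_1^2=\check\omega_2^2$; on a complex surface this factors as
\[
0=\check\omega_2^2-\check\omega_1^2=(\check\omega_1+\check\omega_2)\wedge(\check\omega_2-\check\omega_1).
\]
By $(ii)$ together with the Bott--Chern matching near infinity in $(iii)$, a $\partial\bar\partial$-lemma adapted to the end $\check X_{\Delta^*}$ lets me write $\check\omega_2-\check\omega_1=\sqrt{-1}\partial\bar\partial u$ for a global real function $u$ whose asymptotics are controlled by the $r^{-4/3}$ bound in $(iii)$. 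Substituting gives $(\check\omega_1+\check\omega_2)\wedge\sqrt{-1}\partial\bar\partial u=0$, which says exactly that $u$ is harmonic for the operator $L=\Lambda_{g'}\sqrt{-1}\partial\bar\partial$ (a constant multiple of the Laplace--Beltrami operator) of the averaged Kähler metric $g'$ associated to $\tfrac12(\check\omega_1+\check\omega_2)$. Since by $(iii)$ both $\check\omega_i$ are uniformly equivalent near infinity to semi-flat metrics with a common fiber size (the fiber period $\int_F\check\omega_i$ is fixed by $(ii)$, forcing $\epsilon_1=\epsilon_2$), the operator $L$ is uniformly elliptic on the end.

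Next I would separate the two sources of non-uniqueness. Even with matching de Rham classes, the two metrics may be modeled on semi-flat metrics with different sections $\sigma_i$ and shear parameters $b_{0,i}$; by the description of non-standard semi-flat metrics as fiberwise translations, together with the bijection $\tau\leftrightarrow(b_0,\epsilon)$ of Theorem~\ref{local model}, the discrepancy between the two asymptotic models is precisely the infinitesimal generator of a fiber translation by a holomorphic section. This means the non-decaying part of $u$ is a \emph{fiber-linear} $L$-harmonic function, realized by pullback along a fiber-preserving automorphism $\Phi\in{\rm Aut}_0(X,\mathbb{C})$. Replacing $\check\omega_2$ by $\Phi^*\check\omega_2$ then normalizes the two metrics to be asymptotic to the \emph{same} semi-flat model, so that the corresponding potential $u$ now genuinely decays at infinity at the rate dictated by $(iii)$.

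The final and hardest step is a Liouville/maximum-principle statement: a decaying $L$-harmonic function on the $ALH^*$ geometry must be constant, hence zero, giving $\Phi^*\check\omega_2=\check\omega_1$. This is where the slow $r^{4/3}$ volume growth and the collapsing $I_b$ fiber make matters delicate, since the space of bounded $L$-harmonic functions is nontrivial (it contains exactly the fiber-translation modes removed in the previous step), so one cannot simply invoke a classical Liouville theorem. I expect the argument to proceed by an integration-by-parts/energy estimate on large coordinate annuli $\{r\le R\}$, where the sharp $r^{-4/3}$ decay is used to show the boundary flux terms vanish as $R\to\infty$; the rate $4/3$ should be essentially optimal precisely because it is the threshold at which these boundary contributions become negligible against the $r^{4/3}$ volume growth. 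Controlling these boundary terms, and confirming that after removing the translation mode no further slowly-growing harmonic modes survive, is the main obstacle.
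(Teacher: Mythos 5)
Your proposal tracks the actual argument (which this paper does not reprove: the statement is imported verbatim from \cite{CJL2}, so the comparison is with the proof of Proposition 4.8 there) in its overall architecture: factoring $0=(\check{\omega}_1+\check{\omega}_2)\wedge(\check{\omega}_2-\check{\omega}_1)$ to reduce to a harmonic function for the averaged metric, using the fiber period $\int_F\check{\omega}_i$ from hypothesis $(ii)$ to force $\epsilon_1=\epsilon_2$, and identifying the residual discrepancy between the two semi-flat models (different $\sigma_i$, $b_{0,i}$) with translation by a global holomorphic section of the elliptic fibration --- this is exactly the fiber-preserving map $T_\sigma$ that the present paper invokes at the end of its Torelli proof. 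So the skeleton is right, and you correctly locate the hard analysis in the final Liouville step.

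But that final step, as you propose to carry it out, has a genuine quantitative gap. Hypothesis $(iii)$ bounds the \emph{two-form} difference by $r^{-4/3}$; it gives no decay of the potential. On an $ALH^*$ end the geodesic spheres have area $\sim r^{1/3}$ and the cross-sections have diameter $\sim r^{1/3}$, so integrating $|\sqrt{-1}\partial\bar{\partial}u|\lesssim r^{-4/3}$ twice yields a priori only $u=O(r^{2/3})$ and $|\nabla u|=O(r^{-1/3})$, even after the translation mode is removed. Your boundary flux $\int_{\partial B_R}u\,\partial_\nu u$ is then of size $R^{2/3}\cdot R^{-1/3}\cdot R^{1/3}=R^{2/3}$, which diverges; even for bounded $u$ it is merely $O(1)$, not $o(1)$. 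So the integration-by-parts scheme does not close at the decay rate supplied by $(iii)$, and ``the threshold at which boundary contributions become negligible'' is not where you place it. What is actually required --- and what the cited proof supplies --- is a classification of harmonic functions of sub-$r^{2/3}$ growth on the Calabi model end, obtained by separation of variables along the $T^2\times S^1$ cross-sections: such functions are constants, plus the finitely many translation modes, plus exponentially decaying terms, with no intermediate slowly-growing modes. That spectral statement is the true content of the ``essential optimality'' of the exponent $-4/3$ (two semi-flat metrics with distinct $b_0$ differ at a rate just below it); once it is in hand, $u$ tends to a constant and the plain maximum principle finishes, with no energy estimate needed at all. A secondary gap: the existence of a single-valued global potential $u$ with controlled growth (your ``$\partial\bar{\partial}$-lemma adapted to the end'') is asserted rather than proved; it rests on $b_1(\check{X})=0$ and solving $\bar{\partial}$ with weighted estimates, and the Bott--Chern hypothesis $(iii)$ is precisely what rules out the $dd^c$-cohomology obstruction on the end $\check{X}_{\Delta^*}$ --- if you drop it, the conclusion is false, so it cannot be treated as a formality.
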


\subsection{Perturbations of the model Special Lagrangians}
Let $(X,\omega)$ be a K\"ahler manifold such that the corresponding Riemannian metric is Ricci-flat. Given a Lagrangian submanifold $L\subseteq X$, we can deform $L$ via its mean curvature $\vec{H}$, defining a family of Lagrangians $L_t$ such that 
    \begin{align*}
         \frac{\partial }{\partial t}L_t=\vec{H}. 
    \end{align*} It is proved by Smoczyk \cite{Smo} that the Maslov zero Lagrangian condition is preserved under the flow, and thus the name of Lagrangian mean curvature flow (LMCF). If $X$ admits a covariant holomorphic volume form $\Omega$, then there exists a phase function $\theta:L\rightarrow S^1$ defined by $\Omega|_L=e^{i\theta}\mbox{Vol}_L$.  If $\theta$ is constant then   $L$ is a special Lagrangian. Since  we are working on a Calabi-Yau manifold, the mean curvature of $L$ can be computed by $\vec{H}=\nabla \theta$. In particular, if the LMCF converges smoothly, it converges to a special Lagrangian.

    Now, in general the Lagrangian mean curvature flow may  develop a finite time singularity  \cite{Ne},  which is expected to be related to the Harder-Narasimhan filtration of the Fukaya category \cite{Joy}. However, using a quantitative version of the machinery of Li \cite{Li}, the authors proved a quantitative local regularity theorem for the Lagrangian mean curvature flow in the present setting; see \cite[Theorem 4.23]{CJL}
     
    \begin{thm}\label{LMCF conv}[Theorem 4.23, \cite{CJL}]
    	Let $X$ be a non-compact Calabi-Yau surface with Ricci-flat metric $\omega$ and holomorphic volume form $\Omega$. Fix a point in $X$ and let   $r$ denote the distance function to  this fixed point. Assume that there exists a diffeomorphism $F$ from the end of $\mathcal{C}$ to $X$ such that for all $k\in \mathbb{N}$, one has 
    	\begin{align*}
    	\parallel  \nabla^k_{g_{\mathcal{C}}}(F^*\omega-\omega_{\mathcal{C}})  \parallel_{g_{\mathcal{C}}}<C_k e^{-\delta r^{\frac{2}{3}}},	\parallel  \nabla^k_{g_{\mathcal{C}}}(F^*\Omega-\Omega_{\mathcal{C}})  \parallel_{g_{\mathcal{C}}}<C_k e^{-\delta r^{\frac{2}{3}}},
    	\end{align*} for some constant  $C_k>0$.
    	Then given an ansatz special Lagrangian (from Section \ref{sec: Calabi}) mapped to $X$ via $F$, if it is sufficiently close to infinity along the end of X, it can be deformed to  a genuine special Lagrangian with respect to $(\omega,\Omega)$.
    \end{thm}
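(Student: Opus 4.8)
The plan is to produce the special Lagrangian by the Lagrangian mean curvature flow (LMCF), controlling it through the quantitative local regularity theory for LMCF in this collapsing geometry. Write $L_0:=F(L_{\mathcal C})$ for the image of a far-out ansatz special Lagrangian, located near a level $\{|\xi|^2_h=\epsilon\}$ with $\epsilon$ small (so $L_0$ is close to the zero section, i.e.\ close to infinity). Since $L_{\mathcal C}$ is genuinely special Lagrangian for $(\omega_{\mathcal C},\Omega_{\mathcal C})$, and since $F^*\omega-\omega_{\mathcal C}$ and $F^*\Omega-\Omega_{\mathcal C}$ decay like $e^{-\delta r^{2/3}}$ with all derivatives, the submanifold $L_0$ is \emph{approximately} special Lagrangian for $(\omega,\Omega)$: the restriction $\omega|_{L_0}$ is $O(e^{-\delta r^{2/3}})$ in $C^\infty$, and the Lagrangian angle $\theta$ (defined by $\Omega|_{L_0}=e^{\sqrt{-1}\theta}\,\mathrm{Vol}_{L_0}$) deviates from its constant special value by $O(e^{-\delta r^{2/3}})$.

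The first step is to Lagrangianize $L_0$. The relevant obstruction is the symplectic area $\int_{L_0}\omega$, which is an invariant of the homology class; but the tori $L_{\mathcal C}$ at different levels $\epsilon$ are all homologous, so this number is simultaneously $O(e^{-\delta r^{2/3}})$ for arbitrarily large $r$, and hence vanishes identically. With $\int_{L_0}\omega=0$ and $\|\omega|_{L_0}\|$ exponentially small, I would work in a Weinstein neighbourhood of $L_0$ and write the correction as the graph of an exact one-form, producing a genuine $\omega$-Lagrangian $\widetilde{L}_0$ that is $O(e^{-\delta r^{2/3}})$-close to $L_0$ in $C^\infty$; its Lagrangian angle stays exponentially close to the special constant. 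This $\widetilde{L}_0$ is the initial datum of the flow.

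Now run LMCF from $\widetilde{L}_0$. As the ambient metric is Ricci-flat, the mean curvature is $\vec H=\nabla\theta$ and the Lagrangian angle satisfies the heat equation $\partial_t\theta=\Delta_{L_t}\theta$ along the flow, so its oscillation is non-increasing and remains exponentially small while the flow decreases area. The crux is long-time existence and smooth convergence, and here I would invoke the quantitative local regularity theorem of \cite{CJL} (the quantified form of Li's machinery). The phrase ``sufficiently close to infinity'' is exactly the hypothesis that, after rescaling to unit injectivity radius, both the ambient geometry and $\widetilde{L}_0$ are as close as one wishes to the flat model (Euclidean space together with a flat special Lagrangian) at the relevant scale, so that the scale-invariant smallness hypotheses of the local regularity theorem are met. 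This gives uniform interior curvature bounds along the flow, precludes finite-time singularities, yields long-time existence, and---together with the heat-equation decay of $\theta$---smooth convergence (via subsequential extraction and then full convergence from the monotonicity and the a priori estimates) to a limit $L_\infty$ of constant Lagrangian angle, i.e.\ a genuine special Lagrangian for $(\omega,\Omega)$.

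The main obstacle is precisely this last step: controlling LMCF in a space that is \emph{not} of bounded geometry---the injectivity radius collapses like $r^{-1/3}$ while the curvature decays like $r^{-2}$---and for an initial torus that is itself long and thin, collapsing along the fibre circle. Ordinary bounded-geometry compactness fails, so everything must be phrased in scale-invariant quantities and the ambient curvature absorbed as a lower-order error; this is the role of the ``sufficiently close to infinity'' hypothesis, which provides the room to make all scale-invariant errors small enough to close the estimates. An essentially equivalent route would be a direct implicit-function argument in the spirit of McLean's deformation theory---write the perturbation as the graph of a one-form in the Weinstein neighbourhood and solve the special Lagrangian system, treating the harmonic obstruction directions separately---but packaging the estimates uniformly as the torus is pushed to infinity is cleaner through the quantitative regularity for LMCF.
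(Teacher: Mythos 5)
Your proposal follows essentially the same route as the paper's proof: first deform the ansatz special Lagrangian to a genuine $\omega$-Lagrangian (the paper does this via Moser's trick, exploiting the exactness of $F^*\omega-\omega_{\mathcal{C}}$, which your homological vanishing of $\int_{L_0}\omega$ plus a Weinstein-neighbourhood graph argument replicates), then verify that the deformation preserves the key scale-invariant geometric bounds---including exponential decay of the mean curvature---and finally run LMCF, using the quantitative version of Li's local regularity machinery to handle the unbounded geometry and collapsing injectivity radius and to obtain long-time existence and exponentially fast smooth convergence to a special Lagrangian. The correct identification of the ``sufficiently close to infinity'' hypothesis as supplying the scale-invariant smallness needed to close the estimates matches the paper's argument, so there is nothing substantive to add.
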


Specifically in \cite{CJL}, the authors argue that an ansatz special Lagrangian can be deformed via Moser's trick to a Lagrangian with respect to the Ricci flat metric $\omega$. After proving this deformation preserves several geometric bounds (including exponential decay of the mean curvature vector along the end of $X$),  the authors show that the mean curvature flow converges exponentially fast to a special Lagrangian. We direct the reader to \cite{CJL} for further details.

\section{The Torelli Theorem} \label{sec: tor}

First we recall the definition of $ALH^*$ gravitational instantons following Sun-Zhang \cite{SZ}:
\begin{defn}
	\begin{enumerate}
		\item Given $b\in \mathbb{N}$, an $ALH^*_b$ model end is the hyperK\"ahler triple from the Gibbons-Hawking ansatz on $\mathbb{T}^2\times [0,\infty)$ with the harmonic function $b\rho$, where $T^2$ is the flat two-torus and $\rho$ is the coordinate on $[0,\infty)$
		\item  A gravitational instanton $(X,g)$ is of type $ALH^*$, if there exists a diffeomorphism $F$ from $\mathcal{C}$ to $X$ such that for all $k\in \mathbb{N}$, one has 
		\begin{align*}
		\parallel \nabla^k_{g_{\mathcal{C}}}(F^*g-g_{\mathcal{C}})  \parallel_g=O(r^{-k-\epsilon}), 
			\end{align*} for some $\epsilon>0$. 
	
	\end{enumerate}
\begin{rk}
	It is explained in \cite[Section 2.2]{HSVZ} that the Calabi ansatz is actually an $ALH^*_b$ model end for some $b$. 
\end{rk}

\end{defn}
Let $(X,g)$ be an $ALH^*_b$ gravitational instanton, and fix a choice of hyperK\"ahler triple $(\omega,\Omega)$ such that $\omega$ is the K\"ahler form  with respect to the complex structure determined by $\Omega$. Sun-Zhang proved that the geometry at infinity has exponential decay to the model end. 
\begin{thm}\cite[Theorem 6.19]{SZ} \label{exp decay}
	There exists $\delta>0$ and a diffeomorphism $F$ from the end of $\mathcal{C}$ to $X$ such that for all $k\in \mathbb{N}$, one has $F^*\omega = \omega_{\mathcal{C}} + d\sigma$ for some $1$-form $\sigma$, and
	   \begin{align*}
	       \parallel  \nabla^k_{g_{\mathcal{C}}}(F^*\omega-\omega_{\mathcal{C}})  \parallel_{g_{\mathcal{C}}}<C_k e^{-\delta r^{\frac{2}{3}}}, 	\parallel  \nabla^k_{g_{\mathcal{C}}}(F^*\Omega-\Omega_{\mathcal{C}})  \parallel_{g_{\mathcal{C}}}<C_k e^{-\delta r^{\frac{2}{3}}},
	   \end{align*} for some constant $C_k>0$. 
\end{thm}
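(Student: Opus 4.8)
The plan is to upgrade the polynomial decay built into the definition of an $ALH^*$ instanton to the exponential decay asserted here, by analyzing the linearized hyperK\"ahler equations on the model end $\mathcal{C}$. Both $(F^*\omega, F^*\Omega)$ and $(\omega_{\mathcal{C}}, \Omega_{\mathcal{C}})$ are triples of closed forms obeying the pointwise algebraic constraints $2\omega^2 = \Omega \wedge \bar\Omega$ and $\omega \wedge \Omega = 0$ that characterize a hyperK\"ahler triple. Imposing a gauge condition such as $d^*\eta = 0$ on the difference $\eta := (F^*\omega - \omega_{\mathcal{C}},\, F^*\Omega - \Omega_{\mathcal{C}})$, the closedness together with the algebraic constraint combine into a quasilinear system $L\eta = Q(\eta)$ whose linearization $L$ is elliptic of Laplace type with respect to $g_{\mathcal{C}}$ and whose nonlinearity $Q(\eta)$ is at least quadratic in $\eta$. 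The assertion $F^*\omega = \omega_{\mathcal{C}} + d\sigma$ is precisely the statement that $F^*\omega - \omega_{\mathcal{C}}$ is exact along the end; since $H^2$ of the end is generated by the torus cycles, this is a cohomological (zero-mode) matching condition I will need to arrange.

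Next I would diagonalize $L$ on the compact cross-section. In Gibbons--Hawking coordinates $(\rho, x_1, x_2, \theta)$ one has $g_{\mathcal{C}} = b\rho\,(d\rho^2 + dx_1^2 + dx_2^2) + (b\rho)^{-1}(d\theta + A)^2$, and integrating $dr = \sqrt{b\rho}\,d\rho$ gives $r \sim \rho^{3/2}$, i.e. $\rho \sim r^{2/3}$. Expanding $\eta$ into Fourier modes along the torus fibre $\mathbb{T}^2 \times S^1$ reduces the linearized equation to a family of second-order ODEs in $\rho$. Modes with nonzero $S^1$-frequency $\ell$ satisfy a parabolic-cylinder equation with potential of size $(b\ell\rho)^2$ and decay like $e^{-c\rho^2} = e^{-cr^{4/3}}$; modes that are $S^1$-invariant with nonzero base frequency $(m,n)$ satisfy $f'' - (m^2+n^2)f \approx 0$ and decay like $e^{-\sqrt{m^2+n^2}\,\rho} = e^{-\delta r^{2/3}}$; and the constant mode solves $f'' = 0$, giving only constant and linearly-growing solutions. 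The slowest strictly-decaying rate is therefore governed by the square root of the first nonzero eigenvalue of the flat torus $\mathbb{T}^2$, which fixes both the exponent $r^{2/3}$ and the constant $\delta$.

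The decisive point is that the only non-decaying modes are the constant and linearly-growing ones. Hence any solution of the linearized equation that decays even polynomially must be a superposition of the strictly-decaying modes, and so already decays like $e^{-\delta r^{2/3}}$. To accommodate the genuine nonlinearity I would iterate: starting from the a priori rate $O(r^{-\epsilon})$, the quadratic term $Q(\eta)$ decays at rate $O(r^{-2\epsilon})$, which feeds back through $L$ (invertible on the strictly-decaying spectrum) to improve the decay of $\eta$; repeating drives the rate past every polynomial order and into the exponential regime set by the spectral gap, with convergence made rigorous by a weighted Agmon-type energy estimate. Interior elliptic (Schauder) estimates on unit-size balls, after rescaling by the local geometry, finally promote the decay of $\eta$ to all covariant derivatives $\nabla^k$, yielding the stated bounds for $F^*\omega$ and $F^*\Omega$ simultaneously.

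The main obstacle will be the zero-mode analysis. One must show that the constant and linearly-growing solutions are precisely accounted for by the finite-dimensional freedom in the choice of $F$ and in the semi-flat parameters $(b_0,\epsilon)$ of the model, so that the reference triple can be chosen to match the periods of $(\omega,\Omega)$ along the end; this is exactly what makes $F^*\omega - \omega_{\mathcal{C}}$ exact and produces the $1$-form $\sigma$, while simultaneously guaranteeing the a priori polynomial decay needed to start the argument. A secondary technical difficulty is that $L$ is not translation invariant in $\rho$ (the harmonic function $V = b\rho$ is non-constant), so the separation of variables is only asymptotic and the $\rho$-dependent lower-order terms (curvature and connection contributions) must be absorbed into the weighted estimates. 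In the framework of Sun--Zhang this prerequisite polynomial decay is itself the deepest ingredient, extracted from Cheeger--Fukaya--Gromov $\mathcal{N}$-structure theory; here it is already granted by the hypothesis.
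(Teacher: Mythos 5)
This statement is imported verbatim from Sun--Zhang \cite{SZ} (their Theorem 6.19); the paper contains no proof of it, so there is no internal argument to compare yours against, and the comparison has to be with the strategy of the cited source. At the level of strategy your sketch is a reasonable reconstruction of that kind of proof: the identification $\rho\sim r^{2/3}$ in the Gibbons--Hawking model, the fact that the slowest strictly-decaying mode is governed by the first nonzero eigenvalue of the flat $\mathbb{T}^2$ (fixing both the exponent $r^{2/3}$ and $\delta$), the much faster decay of nonzero $S^1$-weight modes, and the need to absorb the non-decaying (constant and linearly growing) modes into the choice of $F$ and of the model parameters --- which is exactly what produces the exactness statement $F^*\omega=\omega_{\mathcal{C}}+d\sigma$ --- are all correct and are the mechanisms behind the cited result.

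That said, two steps in your outline are genuinely flawed as written. First, the cross-section of an $ALH^*_b$ end is not $\mathbb{T}^2\times S^1$ but a degree-$b$ nilmanifold, an $S^1$-bundle over $\mathbb{T}^2$ with Chern number $b$ (this is forced by $dA=*dV$ with $V=b\rho$); for $S^1$-weight $\ell\neq 0$ there is no global splitting into products $e^{i(mx_1+nx_2)}e^{i\ell\theta}$, and the weight-$\ell$ space is instead a space of sections of the $\ell$-th power of a degree-$b$ line bundle (a Landau-level problem of multiplicity $b\ell$). Your qualitative conclusion survives --- $S^1$-invariant modes do descend to $\mathbb{T}^2$ and dominate, and the $\ell\neq 0$ modes decay even faster --- but the separation of variables you perform is for the wrong manifold, valid only when $b=0$. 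Second, the passage from polynomial to exponential decay cannot proceed as you describe: quadratic feedback starting from $O(r^{-\epsilon})$ upgrades polynomial rates only to better polynomial rates and never crosses into the exponential regime by iteration alone. The mechanism that actually works is to view $Q(\eta)$ as a linear perturbation with decaying coefficients, $L\eta=O(\|\eta\|_{L^\infty})\cdot\eta$, and run a three-annulus or Agmon/Carleman argument for the perturbed operator, whose spectral gap then forces exponential decay at a rate approaching $\delta$ once one knows $\eta\to 0$; you gesture at this with the phrase ``weighted Agmon-type energy estimate,'' but the iteration as stated is not a proof. Two smaller omissions: the $ALH^*$ hypothesis gives polynomial decay of the metric $g$, not of the pair $(\omega,\Omega)$, so one must first rotate the model triple by a fixed rotation to align the complex structures; and since the injectivity radius collapses like $r^{-1/3}$, the Schauder estimates ``on unit-size balls'' must be taken in local universal covers, which is standard but not automatic.
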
 

Consider $L_{\mathcal{C}}\in X_{\mathcal{C}}$ for any primitive class $[L_{\mathcal{C}}]\in H_2(X_{\mathcal{C}},\mathbb{Z})$ with $\epsilon$ small enough. Then as above, one can use Moser's trick to modify $F(L_{\mathcal{C}})$ to a Lagrangian $L\subseteq X$. The LMCF starting at $L$ will then converge smoothly to a special Lagrangian tori by Theorem \ref{exp decay} and Theorem \ref{LMCF conv}.
Notice that from \cite[Proposition 5.24]{CJL} the LMCF flows the ansatz special Lagrangian fibration near infinity to a genuine special Lagrangian fibration on $X\setminus K$ for some compact set $K$. 

Now consider the hyperK\"ahler rotation $\check{X}$ equipped with K\"ahler form $\check{\omega}$ and holomorphic volume form $\check{\Omega}$ such that 
  \begin{align} \label{HKrel}
      \check{\omega}=\mbox{Re}\Omega, \hspace{5mm}\check{\Omega}=\omega-i\mbox{Im}\Omega.
  \end{align}
  Then $\check{X}\setminus K$ admits an elliptic fibration to a non-compact Riemann surface $\check{B}$, which is diffeomorphic to an annuli. From the uniformization theorem $\check{B}$ is either biholomorphic to a punctured disc or a holomorphic annulus. Notice that the $j$-invariants of the elliptic fibres converges to infinity at the end from Theorem \ref{local model} and Theorem \ref{exp decay}. Since the $j$-invariant is a holomorphic function on $\check{B}$, one has $\check{B}$ must be biholomorphic to a punctured disc. 

Again from \cite[Proposition 5.24]{CJL}, the monodromy of the fibration $\check{X}\setminus K\rightarrow \check{B}$ near infinity is the same as the explicit model special Lagrangian fibration. There are two consequences: First, there is no sequence of multiple fibres converging to infinity. Secondly, the monodromy is conjugate to $\begin{pmatrix} 1 & b \\ 0 & 1 \end{pmatrix}$ from direct calculation. Then one can compactify $\check{X}$ to a compact complex surface $\check{X}$ by adding an $I_b$-fibre $\check{D}$ at infinity by  \cite[Corollary 6.3]{CJL}. Now we can use to the classification of surfaces to deduce the following:

\begin{prop} \label{HK res}
	$\check{Y}$ is a rational elliptic surface\footnote{This is a slight modification of \cite[Theorem 1.6]{CJL} taking advantage of Theorem \ref{local model}.}.
\end{prop}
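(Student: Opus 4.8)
The plan is to apply the Enriques--Kodaira classification to the compact complex surface $\check{Y}$, using as input the anticanonical $I_b$-divisor $\check{D}$ and the complete Calabi--Yau structure on $\check{X}=\check{Y}\setminus\check{D}$. First I would extract the relevant numerical data. Since $\check{\Omega}$ has a simple pole along $\check{D}$, the residue identifies $\check{D}\in|-K_{\check{Y}}|$, so $-K_{\check{Y}}$ is effective and nonzero. By Theorem~\ref{local model} the hyperK\"ahler rotation near infinity agrees with the semi-flat model $\check{X}_{mod}$, so $\check{D}$ is genuinely the $I_b$-fibre of the elliptic fibration over the origin of $\Delta$; in particular $\check{D}$ is a cycle of $b$ smooth rational curves of arithmetic genus one, and as a fibre it satisfies $\check{D}^2=0$. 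Because $\check{D}$ is linearly equivalent to $-K_{\check{Y}}$, this gives $K_{\check{Y}}^2=(\check{D})^2=0$.

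Next I would locate $\check{Y}$ in the classification. The effective nonzero divisor $-K_{\check{Y}}=\check{D}$ forces $\kappa(\check{Y})=-\infty$: if $mK_{\check{Y}}$ were effective for some $m>0$, then the sum of effective divisors $m(-K_{\check{Y}})+mK_{\check{Y}}=0$ would be trivial, forcing $m\check{D}=0$ and contradicting $\check{D}\neq0$. Hence $\check{Y}$ has negative Kodaira dimension and lies among the rational, the irrational ruled, and the class VII surfaces.

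The crux of the argument, and the step I expect to be the main obstacle, is to show $b_1(\check{Y})=0$, which simultaneously eliminates the irrational ruled case (where $b_1=2g\ge2$) and the class VII case (where $b_1=1$), and shows $\check{Y}$ is projective. For this I would use the precise topology of the end: by Theorem~\ref{local model} the $ALH^*_b$ model end has link a nilmanifold, namely the mapping torus of $T^2$ with monodromy $\begin{pmatrix}1&b\\0&1\end{pmatrix}$, and I would feed the decomposition $\check{Y}=\check{X}\cup N(\check{D})$ with $N(\check{D})$ retracting onto the cycle $\check{D}$ (so $b_1(\check{D})=1$) into a Mayer--Vietoris computation, using the known first homology of the instanton $\check{X}$. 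The delicate point is extracting this \emph{global} topological control from purely asymptotic model data and correctly accounting for the $I_b$ boundary; this is exactly where the exponential decay of Theorem~\ref{exp decay} and the explicit identification of Theorem~\ref{local model} are needed. Concluding $b_1(\check{Y})=0$ yields that $\check{Y}$ is a rational surface.

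Finally I would identify $\check{Y}$ as a \emph{rational elliptic} surface. A rational surface with $K_{\check{Y}}^2=0$ and $-K_{\check{Y}}$ effective of fibre type is $\mathbb{P}^2$ blown up at nine (possibly infinitely near) points, and the anticanonical pencil $|-K_{\check{Y}}|=|\check{D}|$ defines an elliptic fibration $\check{Y}\to\mathbb{P}^1$ with $\check{D}$ as its $I_b$-fibre. Equivalently, once rationality and $K_{\check{Y}}^2=0$ are known, one checks $h^0(-K_{\check{Y}})\ge2$, so the locally defined fibration near infinity extends across the compact set to a global fibration whose members sweep out $\check{Y}$. This exhibits the elliptic fibration structure and proves the proposition.
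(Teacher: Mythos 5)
Your route to rationality is genuinely different from the paper's, and in one respect cleaner. The paper assumes $\check{Y}$ is minimal, uses $c_1^2=0$ and $b_1=0$ to reduce via the Enriques--Kodaira table to Enriques, K3, or minimal properly elliptic, excludes each using $K_{\check{Y}}=\mathcal{O}_{\check{Y}}(-\check{D})$, concludes $\check{Y}$ is non-minimal, then uses a $(-1)$-curve $E$ with $E\cdot\check{D}=1$ to get $(\check{D}+E)^2>0$, hence projectivity by \cite[Chapter IV, Theorem 5.2]{BPV}, and finally Castelnuovo's criterion ($q=P_2=0$). You shortcut all of this: effectivity of $-K_{\check{Y}}=\check{D}\neq 0$ kills every plurigenus directly, so $\kappa(\check{Y})=-\infty$ with no minimality case analysis, and then $b_1(\check{Y})=0$ leaves only the rational class. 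Your Mayer--Vietoris computation of $b_1(\check{Y})$ is the same one the paper invokes (you spell out the $T^2$-mapping-torus link more explicitly; both arguments take $b_1(\check{X})=0$ as given). One sequencing slip: $b_1=0$ alone does not "show $\check{Y}$ is projective" (non-projective K3 surfaces have $b_1=0$); projectivity follows only after rationality is established, but your argument never actually needs it earlier, so this is harmless.

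The genuine gap is in your final paragraph. The claim that a rational surface with $K_{\check{Y}}^2=0$ and $-K_{\check{Y}}$ effective of fibre type carries an anticanonical \emph{pencil} defining an elliptic fibration is false as a general statement: Riemann--Roch with $K^2=0$ gives only $\chi(-K)=1$, hence $h^0(-K)\geq 1$, and blowing up $\mathbb{P}^2$ at nine general points of a nodal cubic yields a rational surface with $K^2=0$ and an effective anticanonical curve of Kodaira type $I_1$, but with $h^0(-K)=1$ --- no pencil and no elliptic fibration. So $h^0(-K_{\check{Y}})\geq 2$ is precisely the point that must be proved, and your "one checks" does not check it. The missing input is the local fibration near infinity combined with $\operatorname{Pic}(\check{Y})\cong H^2(\check{Y},\mathbb{Z})$: once rationality gives $h^1(\check{Y},\mathcal{O}_{\check{Y}})=h^2(\check{Y},\mathcal{O}_{\check{Y}})=0$, the exponential sequence identifies linear equivalence with homology, so a nearby fibre $F$ of the fibration over $\Delta$, being homologous to $\check{D}$ and distinct from it, is linearly equivalent to $\check{D}$, giving the pencil that extends the local fibration globally. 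This is exactly how the paper closes its proof; without it, your argument establishes that $\check{Y}$ is rational with $K^2=0$ but not that it is a rational \emph{elliptic} surface.
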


\begin{proof}
  From Appendix \cite[Appendix A]{CJL2} the form $\check{\Omega}$ is meromorphic with a simple pole along $\check{D}$. Therefore, we have $K_{\check{Y}}=\mathcal{O}_{\check{Y}}(-\check{D})$. From the elliptic fibration on $\check{Y}\setminus K$, we have 
    $c_1(\check{Y})^2=0$.  
    There are no $(-1)$ curves in the fibre by the  adjunction formula.
    Since $b_1(\check{X})=0$, we also have $b_1(Y)=0$ from the Mayer-Vietoris sequence. Assume that $\check{Y}$ is minimal. Since $c_1(\check{Y})^2=0$ and $b_1(\check{Y})=0$, by the Enriques-Kodaira classification (see for example \cite[Chapter VI, Table 10]{BPV}) it follows that $\check{Y}$ can only be an Enriques surface, a K3 surface, or a minimal properly elliptic surface. $K_{\check{Y}}=\mathcal{O}_{\check{Y}}(-\check{D})$ obviously excludes the first two possibilities. Furthermore, recall that a properly elliptic surface has Kodaira dimension $1$. This is again impossible because $K_{\check{Y}}=\mathcal{O}_{\check{Y}}(-\check{D})$. To sum up, it must be the case that $\check{Y}$ is not minimal. 
    
    Now, any $(-1)$ curve $E$ in $\check{Y}$ has intersection one with $\check{D}$ and so $(\check{D}+E)^2>0$. Therefore, $\check{Y}$ is projective by \cite[Chapter IV, Theorem 5.2]{BPV}. Then $h^1(\check{Y},\mathcal{O}_{\check{Y}})=0$ from Hodge theory and $h^0(\check{Y},K_{\check{Y}}^2)=0$ since $-K_{\check{Y}}$ is effective. Finally, the Castelnuovo's rationality criterion implies that $\check{Y}$ is rational. Thus the local elliptic fibration near $\check{D}$ in $\check{Y}$ actually extends to an elliptic fibration. Indeed, one has $\mbox{Pic}(\check{Y})\cong H^2(\check{Y},\mathbb{Z})$ since $H^1(\check{Y},\mathcal{O}_{\check{Y}})=H^2(\check{Y},\mathcal{O}_{\check{Y}})=0$. Thus, $\check{Y}$ is a rational elliptic surface. 
\end{proof}
To sum up, we proved the following uniformization theorem:
\begin{thm}
	Any $ALH^*_b$ gravitational instanton (up to hyperK\"ahler rotation) can be compactified to a rational elliptic surface.
\end{thm}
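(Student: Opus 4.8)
The plan is to produce the compactification geometrically: first build a special Lagrangian torus fibration near infinity, then convert it into a holomorphic elliptic fibration by a hyperK\"ahler rotation, and finally read off rationality from surface classification. The starting input is Sun-Zhang's exponential decay (Theorem~\ref{exp decay}): it identifies the end of the $ALH^*_b$ instanton $(X,\omega,\Omega)$ with the Calabi ansatz end $(X_{\mathcal{C}},\omega_{\mathcal{C}},\Omega_{\mathcal{C}})$ up to an $O(e^{-\delta r^{2/3}})$ error in all derivatives, which is exactly the hypothesis feeding the perturbation machinery of Section~\ref{sec: Calabi}. First I would fix a primitive class $[L_{\mathcal{C}}]\in H_2(X_{\mathcal{C}},\mathbb{Z})$ and, for $\epsilon$ small, transport the corresponding ansatz special Lagrangian into $X$ via $F$. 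Moser's trick deforms it to a genuine Lagrangian $L\subseteq X$, and Theorem~\ref{LMCF conv} guarantees that the Lagrangian mean curvature flow issuing from $L$ converges smoothly and exponentially to a special Lagrangian torus. Carrying this out for the whole model family (and invoking the fibration statement from \cite{CJL}) yields a genuine special Lagrangian torus fibration on $X\setminus K$ for some compact set $K$.

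Next I would perform the hyperK\"ahler rotation \eqref{HKrel}, setting $\check{\omega}=\mathrm{Re}\,\Omega$ and $\check{\Omega}=\omega-i\,\mathrm{Im}\,\Omega$, under which the special Lagrangian fibration becomes a holomorphic elliptic fibration of $\check{X}\setminus K$ over a Riemann surface $\check{B}$ diffeomorphic to an annulus. The base must then be identified complex-analytically: by the uniformization theorem $\check{B}$ is either a punctured disc or a genuine holomorphic annulus, but Theorem~\ref{local model} combined with Theorem~\ref{exp decay} forces the $j$-invariants of the fibers to tend to infinity at the end, and since $j$ is holomorphic on $\check{B}$ this excludes the annulus and pins down $\check{B}$ as a punctured disc.

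With the base fixed, the monodromy of the fibration near infinity agrees with that of the explicit model of Section~\ref{sec: Calabi}, hence is conjugate to $\begin{pmatrix} 1 & b \\ 0 & 1\end{pmatrix}$; this parabolic monodromy simultaneously rules out any sequence of multiple fibers escaping to infinity and licenses compactifying $\check{X}$ by inserting an $I_b$ fiber $\check{D}$ at the origin. It then remains to show the resulting compact surface $\check{Y}$ is rational elliptic, which is Proposition~\ref{HK res}: since $\check{\Omega}$ has a simple pole along $\check{D}$ one has $K_{\check{Y}}=\mathcal{O}_{\check{Y}}(-\check{D})$, and this anti-effective canonical class excludes Enriques, K3, and properly elliptic surfaces in the Enriques-Kodaira classification, forces non-minimality, and—after noting that a $(-1)$-curve meets $\check{D}$ once so that $\check{Y}$ is projective—yields rationality via Castelnuovo's criterion, whereupon the local fibration extends to a global elliptic one.

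The main obstacle is the construction and monodromy control of the special Lagrangian fibration itself; everything downstream—the $j$-invariant identification of the base and the surface classification—is comparatively formal once the fibration is known to exist near infinity with the expected parabolic monodromy. Concretely, the delicate analytic point is that the Lagrangian mean curvature flow must not develop a finite-time singularity and the limiting special Lagrangians must assemble into a genuine fibration rather than a family of disjoint tori, which is precisely what the quantitative regularity of Theorem~\ref{LMCF conv}, driven by the exponential decay of Theorem~\ref{exp decay}, is designed to secure.
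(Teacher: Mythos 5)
Your proposal matches the paper's own proof essentially step for step: Sun--Zhang's exponential decay (Theorem~\ref{exp decay}) feeding Moser's trick and the LMCF regularity theorem (Theorem~\ref{LMCF conv}) to produce the special Lagrangian fibration on $X\setminus K$, the hyperK\"ahler rotation \eqref{HKrel} together with the $j$-invariant argument identifying the base as a punctured disc, the model monodromy from \cite{CJL} justifying the $I_b$-fibre compactification, and finally Proposition~\ref{HK res} (Enriques--Kodaira classification, non-minimality, projectivity, Castelnuovo) for rationality. The argument is correct and takes the same route as the paper, so there is nothing to add.
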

\begin{rk}
    There is an analogue result of Hein-Sun-Viaclovsky-Zhang \cite{HSVZ2} proving that up to hyperK\"ahler rotation any $ALH^*_b$ gravitational instanton can be compactified to a weak del Pezzo surface. 
\end{rk}
The possible singular fibres of a rational elliptic surface are classified by Persson \cite{Per}. The rational elliptic surface $\check{Y}$ can only admit an $I_b$-fibre for $b\leq 9$, which gives a constraint on $b$. It is well-known that there exists singe deformation family of pairs of rational elliptic surfaces with an $I_b$ fibre for $b\neq 8$, and there are two deformation families for $b=8$. Different families have different Betti numbers. In particular, there exists $ALH^*_b$ gravitational instantons for every $1\leq b\leq 9$ from the work of Hein \cite{Hein}. Thus, we have the following consequence:
\begin{cor}
	\begin{enumerate}
		\item There are only $ALH^*_b$ gravitational instantons for $b\leq 9$.
		\item There are only $10$ diffeomorphism types of $ALH^*_b$ gravitation instantons. 
	\end{enumerate} 	
\end{cor}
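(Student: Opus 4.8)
The plan is to reduce both assertions to the classification of pairs $(\check{Y},\check{D})$, where $\check{Y}$ is a rational elliptic surface and $\check{D}$ is an $I_b$-fibre. The crucial observation is that hyperK\"ahler rotation does not change the underlying smooth four-manifold: the $2$-sphere of complex structures compatible with a fixed hyperK\"ahler metric all live on one and the same real manifold. Consequently the diffeomorphism type of an $ALH^*_b$ gravitational instanton $(X,g)$ equals that of the open surface $\check{X}=\check{Y}\setminus\check{D}$ furnished by the uniformization theorem above. Both parts of the corollary thereby become purely algebro-geometric and topological statements about the pairs $(\check{Y},\check{D})$.

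For part $(1)$ I would deduce the bound $b\leq 9$ from the lattice geometry of $\check{Y}$. The fibre components of $\check{D}$ disjoint from a fixed section span a negative-definite root lattice of type $A_{b-1}$ inside $H^2(\check{Y},\mathbb{Z})$, and this lattice sits in the orthogonal complement of the section and fibre classes, which for a rational elliptic surface is isometric to $E_8$. Since $E_8$ has rank $8$, we must have $b-1\leq 8$; this is precisely the content of Persson's constraint cited above. Hence no $ALH^*_b$ instanton exists for $b\geq 10$, while $b=9$ is permitted since $A_8$ does embed in $E_8$.

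For part $(2)$ the governing principle is deformation invariance. If $(\check{Y}_t,\check{D}_t)$ is a family of such pairs over a connected base, then Ehresmann's fibration theorem applied to the total space, after deleting the subfamily $\{\check{D}_t\}$, shows the complements $\check{X}_t$ are mutually diffeomorphic; thus the diffeomorphism type of $\check{X}$ depends only on the deformation family of $(\check{Y},\check{D})$. I would then invoke the classical enumeration of these families: exactly one family for each $1\leq b\leq 9$ with $b\neq 8$, and exactly two for $b=8$, for a total of $8+2=10$. Distinct families are separated by the Betti numbers of $\check{X}$, so they yield pairwise non-diffeomorphic instantons, and Hein's construction realizes each family by an actual $ALH^*_b$ gravitational instanton. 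Assembling these facts produces precisely $10$ diffeomorphism types.

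The genuinely non-formal input is the classification of the deformation families together with their topological distinctness, and in particular the splitting of $b=8$ into two families while every other value of $b$ gives only one. For $b\neq 8$ the integer $b$ is read off from the topology of $\check{X}$, since a short inclusion-exclusion gives $\chi(\check{X})=12-b$, which already separates these eight families. The subtle case is $b=8$, where the two families share the same Euler characteristic $\chi(\check{X})=4$; here one must appeal to the more refined fact, recorded above, that the two families nonetheless have different Betti numbers and hence non-diffeomorphic complements. Granting this classical input, the remaining assembly is a formal consequence of the uniformization theorem and deformation invariance.
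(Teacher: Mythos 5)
Your overall route is the same as the paper's: the paper likewise deduces the corollary from the uniformization theorem plus three cited facts --- Persson's classification (giving $b\leq 9$), the ``well-known'' count of deformation families of pairs $(\check{Y},I_b)$ (one for each $b\neq 8$, two for $b=8$), and Hein's existence result --- together with the observation that hyperK\"ahler rotation preserves the underlying smooth manifold. Your two elaborations are correct and genuinely sharpen what the paper leaves implicit: the lattice derivation $A_{b-1}\hookrightarrow E_8$ of the bound $b\le 9$ is valid (one should note that a section exists: since $K_{\check{Y}}=\mathcal{O}_{\check{Y}}(-\check{D})$, adjunction forces every $(-1)$-curve to meet the fibre class once, and $\check{Y}$ is rational and non-minimal, so $(-1)$-curves exist and are sections; this also gives relative minimality for free), and the deformation-invariance step is right in spirit, though since $\check{D}$ is a nodal curve rather than a submanifold you should invoke triviality of the equisingular family of pairs (Thom--Mather, or a tubular-neighborhood version of Ehresmann) rather than Ehresmann for a smooth divisor.

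There is, however, one step that is false as stated --- and, in fairness, the paper's own text makes the identical claim: the two $b=8$ families are \emph{not} separated by Betti numbers. From the long exact sequence of the pair one gets $H_1(\check{X};\mathbb{Z})\cong \bar{S}/S$, where $S\subset H^2(\check{Y},\mathbb{Z})$ is the span of the classes of the components of $\check{D}$ and $\bar{S}$ is its saturation; in particular $b_1(\check{X})=0$ in all cases, and $b_2(\check{X})=11-b$, $b_3(\check{X})=0$, consistently with $\chi(\check{X})=12-b$. Hence both $b=8$ families have complement Betti numbers $(1,0,3,0,0)$, and your appeal to ``different Betti numbers'' (like the paper's) cannot distinguish them. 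What does distinguish them is torsion: one family has $H_1(\check{X};\mathbb{Z})=0$ and the other $H_1(\check{X};\mathbb{Z})=\mathbb{Z}/2$, corresponding under hyperK\"ahler rotation to the two degree-$8$ del Pezzo surfaces $\mathrm{Bl}_1\mathbb{P}^2$ and $\mathbb{P}^1\times\mathbb{P}^1$, whose Tian--Yau spaces have $H_1=0$ and $H_1=\mathbb{Z}/2$ respectively (for $\mathbb{P}^1\times\mathbb{P}^1$ the map $x\mapsto x\cdot(-K)$ has image $2\mathbb{Z}$). Since integral homology, not just Betti numbers, is a diffeomorphism invariant, the conclusion of ten diffeomorphism types survives; but in your part (2) you should replace ``different Betti numbers'' by ``different integral homology ($2$-torsion in $H_1$)'' in the $b=8$ case.
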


Before we prove the Torelli theorem of $ALH^*_b$-gravitational instantons, we first recall the Torelli theorem of log Calabi-Yau surfaces \cite{GHK}. Let $(Y,D)$ be a Looijenga pair, i.e. $Y$ is a rational surface and $D\in |-K_Y|$ is an anti-canonical cycle. Consider the homology long exact sequence of pairs $(Y,D)$ with coefficients in $\mathbb{Z}$
 \begin{align}\label{long exact seq}
0 = H_3({Y})\rightarrow H_3({Y},{Y}\setminus {D})\xrightarrow{\partial_*} H_2({Y}\setminus {D})\xrightarrow{\iota} H_2(Y)\rightarrow H_2({Y},{Y}\setminus {D}).
\end{align} 
Here we identify $H_k(Y,Y\setminus D)$ with $H^k(D)$ by Poincare duality. Let $\epsilon\in H^1(D)$ denote a generator, which determines its orientation.
There exists a unique meromorphic volume form $\Omega_{Y}$ with a simple pole along $D$ and normalization $\int_{\partial_*(\epsilon)}\Omega_Y=1$.  Denote by $C_Y^{++}$   the subcone of $\mbox{Pic}(Y)$ which consists of element $\beta$ satisfying  \begin{enumerate}
  	\item $\beta^2>0$, i.e. $\beta$ is in the positive cone. 
  	\item $\beta. [E]\geq 0$ for any $(-1)$-curve $E$ in $Y$. 
  \end{enumerate} By \cite[Lemma 2.13]{GHK}, $C_Y^{++}$ is invariant under parallel transport. 
We denote by $\Delta_{Y}$ the set of nodal classes of $Y$, i.e. 
\begin{align*}
   \Delta_Y=\{\alpha\in \mbox{Pic}(Y)|\alpha \mbox{ can be represented by a $(-2)$-curve in $Y\setminus D$} \}.
\end{align*} For each element $\alpha\in \Delta_Y$, there is an associate reflection as an automorphism on $\mbox{Pic}(Y)$ given by 
\begin{align*}
    s_{\alpha}: \beta\mapsto \beta+\langle \alpha,\beta\rangle.
\end{align*} The Weyl group $W_{Y}$ is then the group generated by $s_{\alpha},\alpha\in \Delta_Y$.

With the above notations, then the Gross-Hacking-Keel weak Torelli theorem for Looijenga pairs is stated as follows:
\begin{thm}[Theorem 1.8, \cite{GHK}]\label{Tor}
   Let $(Y_1,D),(Y_2,D)$ be two Looijenga pairs and $\mu:\mbox{Pic}(Y_1)\rightarrow \mbox{Pic}(Y_2)$ be an isomorphism of lattices. Assume that 
     \begin{enumerate}
     	\item $\mu([D_i])=([D_i])$ for all $i$.
     	\item $\mu(C_{Y_1}^{++})=C_{Y_2}^{++}$.
     	\item $\mu([\Omega_{Y_1}])=[\Omega_{Y_2}]$, where $\Omega_i$ is the meromorphic form on $Y_i$ with a simple pole along $D_i$ and with the normalization described above. \footnote{Here we use a different period interpretation, which is stronger. See \cite[Proposition 3.12]{Fr}}
     	     \end{enumerate} Then there exists a unique $g\in W_{Y_1}$ such that $\mu\circ g=f^*$ for an isomorphism of pairs $f:(Y_2,D)\rightarrow (Y_1,D)$. 
\end{thm}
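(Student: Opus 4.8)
The plan is to prove this in the classical mold of a Torelli theorem: realize both pairs as points of one moduli space of \emph{marked} Looijenga pairs, introduce a period map to an appropriate period domain, and show that the period point together with the chamber data recorded by $C^{++}$ pins the pair down up to the Weyl group action and isomorphism of pairs. Concretely, fix an abstract lattice $L$ equipped with distinguished vectors playing the role of the components $[D_i]$ (and hence of the anticanonical class), and call $(Y,D,\psi)$ a marked pair when $\psi\colon L\to\operatorname{Pic}(Y)$ is a lattice isometry carrying this data to $[D]$ and its components. Using hypothesis (1), I would transport a marking $\psi_1$ of $(Y_1,D)$ across $\mu$ to the marking $\psi_2=\mu\circ\psi_1$ of $(Y_2,D)$; hypothesis (3) then says the two marked pairs have equal period point in $L$-coordinates, and hypothesis (2) says they sit over the same chamber of the positive cone. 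It therefore suffices to prove that two marked pairs with equal period and equal chamber differ by an isomorphism of pairs after precomposing one marking with a unique $g\in W$.

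\emph{Local Torelli.} First I would show the period map is a local isomorphism. Deformations of $(Y,D)$ are controlled by $H^1(Y,T_Y(-\log D))$, and since $(Y,D)$ is log Calabi--Yau one has $K_Y+D\sim 0$, so $\Omega_Y$ induces an isomorphism $T_Y(-\log D)\cong\Omega^1_Y(\log D)$. Hence $H^1(Y,T_Y(-\log D))\cong H^1(Y,\Omega^1_Y(\log D))$, which is exactly the tangent space to the Hodge-theoretic period data attached to the mixed Hodge structure on $H^2(Y\setminus D)$. A Griffiths-transversality computation then identifies the derivative of the period map with this isomorphism, giving local Torelli: the period map is étale on the moduli of marked pairs.

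\emph{Surjectivity and an explicit period.} For the global statement I would use a toric model: every Looijenga pair arises from a toric pair $(\bar Y,\bar D)$ by blowing up finitely many points on the interior $\mathbb{G}_m$-orbits of the boundary $\bar D$. The torus coordinates of these points are recorded precisely by the period point, and conversely any admissible choice of positions produces a Looijenga pair with prescribed lattice data. This simultaneously yields surjectivity of the period map and a concrete grip on its fibers: two marked pairs with equal period have toric models with identical blow-up loci, hence are isomorphic when those points are in general position, while the special configurations—exactly those producing a $(-2)$-curve in $Y\setminus D$—are the ones where the reconstruction is ambiguous, the ambiguity being absorbed by a reflection $s_\alpha$, i.e.\ an element of $W_Y$.

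\emph{Deduction and uniqueness.} Combining these steps, the fiber of the period map over a fixed chamber of $C^{++}$ is a single $W$-orbit. Since $\mu$ matches both the period (3) and the chamber structure of $C^{++}$ (2), there is $g\in W_{Y_1}$ for which $\mu\circ g$ is realized by a genuine isomorphism of pairs $f\colon(Y_2,D)\to(Y_1,D)$, i.e.\ $\mu\circ g=f^*$. Uniqueness of $g$ follows because $W$ acts faithfully on the chambers of the positive cone while $f^*$ preserves $C^{++}$, so any two admissible corrections differ by a chamber-fixing Weyl element, which must be the identity. The main obstacle is precisely the global injectivity—upgrading local Torelli to the assertion that period-plus-chamber data determine the pair up to $W$. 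This is where the toric-model description does the essential work, replacing the density and monodromy arguments familiar from the K3 Torelli theorem with an explicit reconstruction of $(Y,D)$ from its periods, and where the careful bookkeeping of $(-2)$-curves and the Weyl group cannot be avoided.
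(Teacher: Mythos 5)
The paper does not prove Theorem \ref{Tor} at all: it is imported verbatim from Gross--Hacking--Keel \cite{GHK} (their Theorem 1.8), with the strengthened period hypothesis taken from Friedman \cite[Proposition 3.12]{Fr}. So the only meaningful benchmark for your proposal is the GHK proof itself, and there your outline does track the genuine route in broad strokes: marked pairs, a period point (valued in $\operatorname{Hom}$ of a lattice into $\mathbb{C}^*$, coming from the mixed Hodge structure on $H^2(Y\setminus D)$), surjectivity via toric models obtained by interior blow-ups of the boundary, and the Weyl group acting on the chambers of the positive cone cut out by the nodal classes.

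Measured as a proof, however, two steps have genuine gaps. First, the global injectivity --- exactly what you yourself flag as the main obstacle --- is not actually dispatched by your toric-model reconstruction. ``Equal period implies toric models with identical blow-up loci, hence isomorphic'' fails as stated: a Looijenga pair admits many toric models, the identification of blow-up positions with the period point depends on a choice of toric model compatible with the marking, and nothing in your argument makes $\mu$ carry the exceptional classes of a model of $Y_1$ to exceptional classes of a model of $Y_2$ --- arranging this is precisely what preservation of $C^{++}$, corrected by a Weyl element so as to preserve the nef chamber, must be used for. Nor is the $W$-ambiguity localized at the special point configurations producing $(-2)$-curves; it is a global chamber phenomenon. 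In \cite{GHK} injectivity is instead proved by realizing both pairs as fibres of the universal family over $\operatorname{Hom}(\operatorname{Pic}(Y_1),\mathbb{C}^*)$ (their Construction 5.7 --- the very device the present paper uses later in the proof of Theorem \ref{Torelli}) and showing that an isometry preserving the period point and the nef cone is effective, via parallel transport and a deformation/specialization argument. Second, your uniqueness argument invokes faithfulness of the $W$-action on chambers, but what is needed is triviality of chamber stabilizers, i.e.\ the simple transitivity of \cite[Theorem 3.2]{GHK}; faithfulness alone does not exclude a nontrivial chamber-fixing element. (Your local Torelli step is essentially fine: $K_Y+D\sim 0$ does give $T_Y(-\log D)\cong \Omega^1_Y(\log D)$, though the period here is a $\mathbb{C}^*$-valued mixed-Hodge datum rather than a classical polarized period subject to Griffiths transversality, and in GHK the local statement is subsumed by the universal family construction rather than proved separately.)
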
	

We are now ready to prove our Torelli theorem. 
\begin{thm} \label{Torelli}
	Let $(X_i,\omega_i,\Omega_i)$ be $ALH^*_b$ gravitational instantons such that there exists a diffeomorphism $F:X_2\cong X_1$ with 
	\begin{align*}
	    	F^*[\omega_1]=[\omega_2]\in H^2(X_2,\mathbb{R}), F^*[\Omega_1]=[\Omega_2]\in H^2(X_2,\mathbb{C}).
	\end{align*} Then there exists a diffeomorphism $f:X_2\rightarrow X_1$ such that $f^*\omega_1=\omega_2$ and $f^*\Omega_1=\Omega_2$. 
\end{thm}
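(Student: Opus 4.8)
The plan is to reduce the statement to the algebraic Torelli theorem of Gross--Hacking--Keel (Theorem \ref{Tor}) together with the uniqueness theorem for the complex Monge--Amp\`ere equation (Theorem \ref{uniqueness}). First I would hyperK\"ahler rotate each $(X_i,\omega_i,\Omega_i)$ as in \eqref{HKrel}, setting $\check\omega_i=\mathrm{Re}\,\Omega_i$ and $\check\Omega_i=\omega_i-\sqrt{-1}\,\mathrm{Im}\,\Omega_i$. By Proposition \ref{HK res} and the uniformization theorem following it, each $\check X_i$ compactifies to a rational elliptic surface $\check Y_i$ by adding an $I_b$-fibre $\check D_i$, so that $(\check Y_i,\check D_i)$ is a Looijenga pair with $\check\Omega_i$ meromorphic with a simple pole along $\check D_i$ and $K_{\check Y_i}=\mathcal O_{\check Y_i}(-\check D_i)$. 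Since $F$ preserves the cohomology of $\omega_i$ and $\Omega_i$, it also preserves the rotated classes: $F^*[\check\omega_1]=[\check\omega_2]$ and $F^*[\check\Omega_1]=[\check\Omega_2]$.

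Next I would convert $F$ into the input of Theorem \ref{Tor}. As $F$ respects the common $ALH^*_b$ model end, it extends to a diffeomorphism of the compactified pairs $(\check Y_2,\check D_2)\to(\check Y_1,\check D_1)$ and hence induces a lattice isomorphism $\mu:\mathrm{Pic}(\check Y_1)\to\mathrm{Pic}(\check Y_2)$, using $\mathrm{Pic}(\check Y_i)\cong H^2(\check Y_i,\mathbb Z)$. As a map of pairs $\mu$ matches the boundary classes, giving hypothesis (1); the period normalization is recorded by $F^*[\check\Omega_1]=[\check\Omega_2]$, giving hypothesis (3). For hypothesis (2) I would use that each $[\check\omega_i]$ is a K\"ahler, hence positive, class and that $\mu$ carries $[\check\omega_1]$ to $[\check\omega_2]$, so $\mu$ sends the ample chamber $C^{++}_{\check Y_1}$ to $C^{++}_{\check Y_2}$. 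Granting this, Theorem \ref{Tor} produces a $g\in W_{\check Y_1}$ and an isomorphism of pairs $f:(\check Y_2,\check D_2)\to(\check Y_1,\check D_1)$ with $\mu\circ g=f^*$. Since both $\mu$ and $f^*$ preserve $C^{++}$ while the Weyl group acts freely on the chambers, $g$ must be the identity; thus $f^*=\mu$, so $f^*[\check\omega_1]=[\check\omega_2]$, and comparing the normalized meromorphic forms, $f^*\check\Omega_1=\check\Omega_2$.

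Finally I would push everything onto $\check X_2$. The two Ricci-flat K\"ahler metrics $\check\omega_2$ and $f^*\check\omega_1$ live on the same complex surface $\check X_2$, share the holomorphic volume form $\check\Omega_2$, hence satisfy the same Monge--Amp\`ere equation $\check\omega^2=\alpha^2\check\Omega_2\wedge\overline{\check\Omega_2}$ with the same constant $\alpha$ fixed by the cohomological data, and lie in the same de Rham class by $f^*[\check\omega_1]=[\check\omega_2]$. By Theorem \ref{exp decay} and Theorem \ref{local model} both are exponentially asymptotic to (non-standard) semi-flat metrics, so the polynomial decay hypothesis $(iii)$ of Theorem \ref{uniqueness} holds. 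Theorem \ref{uniqueness} then yields $\Phi\in\mathrm{Aut}_0(\check X_2,\mathbb C)$ with $\Phi^*f^*\check\omega_1=\check\omega_2$; being fibre-preserving and in the identity component, $\Phi$ fixes $\check\Omega_2$. Setting $h=f\circ\Phi$ gives $h^*\check\omega_1=\check\omega_2$ and $h^*\check\Omega_1=\check\Omega_2$, and undoing the hyperK\"ahler rotation \eqref{HKrel} turns these into $h^*\omega_1=\omega_2$ and $h^*\Omega_1=\Omega_2$, as desired.

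The main obstacle I anticipate is the seam between the two cited theorems, namely verifying hypothesis (2) of Theorem \ref{Tor} and the consequent triviality of the Weyl element $g$: one must argue that the diffeomorphism-induced $\mu$ genuinely carries the ample chamber of $\check Y_1$ to that of $\check Y_2$, equivalently that it matches the nodal and $(-1)$-curve configurations, since only then does Theorem \ref{Tor} deliver an isomorphism $f$ matching the K\"ahler class \emph{exactly} — which is precisely what Theorem \ref{uniqueness} requires. A secondary point is bookkeeping the normalizations, namely the constant $\alpha$ and the residue normalization of $\check\Omega_i$, so that the hypotheses of Theorem \ref{uniqueness} hold on the nose.
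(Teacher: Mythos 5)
Your overall skeleton coincides with the paper's (rotate via \eqref{HKrel}, compactify to Looijenga pairs, apply Theorem \ref{Tor}, kill the Weyl element using the K\"ahler class, finish with Theorem \ref{uniqueness} and rotate back), but the step you compress into one sentence --- ``as $F$ respects the common $ALH^*_b$ model end, it extends to a diffeomorphism of the compactified pairs'' --- is a genuine gap, and it is precisely where the paper does its real work. The hypothesis gives only an \emph{arbitrary} diffeomorphism $F$ matching cohomology classes; nothing says it intertwines the asymptotic fibration structures, a diffeomorphism of the open surfaces need not extend over the added $I_b$-fibres, and hypothesis (1) of Theorem \ref{Tor} requires matching the individual boundary components $[\check{D}_{1,i}]\mapsto[\check{D}_{2,i}]$, which your argument never produces. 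The paper handles this in two moves you omit. First, the two hyperK\"ahler rotations must be coordinated: $\check{X}_1$ is built from the special Lagrangian fibration on $X_1$ with fibre class $F_*[L]$, whose existence is exactly what the hypothesis $F^*[\Omega_1]=[\Omega_2]$ buys; rotating the two sides independently, as you do, leaves no reason for $F$ to relate the two elliptic fibrations at all. Second, Lemma \ref{intermediate} homotopes $F$ near infinity, using that the induced fibrewise mapping class on the boundary torus must commute with the monodromy $\begin{pmatrix} 1 & b\\ 0 & 1 \end{pmatrix}$ and is hence of the form $\pm\begin{pmatrix} 1 & m\\ 0 & 1\end{pmatrix}$, so that the modified $F$ carries local sections near infinity to local sections. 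Only then, since $H_2$ of a rational elliptic surface is generated by fibre components and sections, does one get a lattice isometry $\tilde{F}^*$ on $H^2(\check{Y}_i,\mathbb{Z})$ compatible with $F^*$ under restriction; no extension of $F$ itself is ever produced or needed.

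Your verification of hypothesis (2) of Theorem \ref{Tor} also does not work as stated: knowing that $\mu$ carries one K\"ahler class to another does not give $\mu(C^{++}_{\check{Y}_1})=C^{++}_{\check{Y}_2}$, because the classes $[\check{\omega}_i]$ live on the \emph{open} surfaces (their lifts to $H^2(\check{Y}_i)$ are ambiguous up to boundary components and are not known to be ample), while $C^{++}$ is cut out by positivity against all actual $(-1)$-curves of $\check{Y}_i$ --- effective-curve data that the open K\"ahler class does not see. The paper instead decomposes $\tilde{F}^*$ as a parallel transport in the Gross--Hacking--Keel universal family \cite{GHK} followed by pullback under a biholomorphism, both of which preserve $C^{++}$; the K\"ahler-class input is reserved for the later step $g=\mathrm{id}$, and even there one needs the additional observation that Weyl chambers have disjoint images under the restriction $H^2(\check{Y}_1)\to H^2(\check{X}_1)$ --- via the exact sequence \eqref{long exact seq}, since the ambiguity $\sum_i a_i[D_i]$ is orthogonal to all roots --- so that classes on the open surface suffice to pin down the chamber. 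You correctly flagged this seam as the main obstacle, but the resolution you sketch for it is circular; the universal-family/parallel-transport mechanism is the missing idea. Your final step (same Monge--Amp\`ere equation, Theorem \ref{uniqueness}, with the automorphism a fibrewise translation not altering $(2,0)$-forms) does match the paper.
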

\begin{proof}
Assume that $(\check{Y}_2,\check{D}_2)$ be the pair of rational elliptic surface and an $I_b$ fibre such that $\check{X}_2=\check{Y}_2\setminus \check{D}_2$ is a hyperK\"haler rotation of $(X_2,\omega_2,\Omega_2)$ with elliptic fibration and fibre class $[L]\in H_2(X_2,\mathbb{Z})$. Thanks to the assumption $F^*[\Omega_1]=[\Omega_2]$, there exists a special Lagrangian fibration on $(X_1,\omega_1,\Omega_1)$ with fibre class $F_*[L]$. Let $(\check{Y}_1,\check{D}_1)$ be the pair of rational elliptic surface and $I_b$ fibre such that $\check{X}_1=\check{Y}_1\setminus \check{D}_{1}$ is a hyperK\"haler rotation of $X_1$ with elliptic fibration with fibre class $F_*[L]$. 
Denote $(\check{\omega}_i,\check{\Omega}_i)$ be the hyperK\"ahler triple on $\check{X}_i$. 
From Theorem \ref{local model} and Theorem \ref{exp decay}, the resulting holomorphic volume form $\check{\Omega}_i$ on $\check{X}_i$ is meromorphic on $\check{Y}_i$ and has a simple pole along $\check{D}_i$. 

We will first use the weak Torelli theorem of Looijenga pairs (Theorem \ref{Tor}) to show that there exists a biholomoprhism $\check{Y}_2\rightarrow \check{Y}_1$ such that the induced map on $H^2(\check{X}_2,\mathbb{Z})$ is the same as $F^*$. To achieve that we will construct an isomorphism of lattices $\tilde{F}^*:H^2(\check{Y}_1,\mathbb{Z})\rightarrow H^2(\check{Y}_2,\mathbb{Z})$ from the diffeomorphism $F$ such that $\tilde{F}^*([\check{D}_{1,i}])=[\check{D}_{2,i}]$.

\begin{lem} \label{intermediate}
	There exists a diffeomorphism $F':X_2\rightarrow X_1$ such that 
	\begin{enumerate}
		\item $F'$ is homotopic to $F$ and 
		\item if $C\subseteq \check{Y}_2$ is a $2$-cycle which is a local section of the fibration $\check{Y}_2\rightarrow \mathbb{P}^1$ near infinity and intersects $\check{D}_{i,2}$ transversally for some $i$, then the closure of $F'(C\cap X_2)$ intersects $\check{D}_{i,1}$ transversally and is again a local section of $\check{Y}_1\rightarrow \mathbb{P}^1$ near infinity. 
	\end{enumerate}
\end{lem}
\begin{proof}
	There exist compact sets $K_i\subset X_i$ such that $g_i:X_i\setminus K_i\cong X_{\mathcal{C}}$. Recall that $F$ sends a neighborhood of infinity of $X_2$ to a neighborhood of infinity of $X_1$ and, for each $i=1,2$, there exists a special Lagrangian fibration on $X_i\setminus K_i\rightarrow \Delta^*$, where $\Delta^*$ is the punctured disc. We may choose $K_1,K_2$ such that $F(X_2\setminus K_2)\subseteq X_1\setminus K_1$ and $\partial K_i$ is the preimage of a loop in $\Delta^*$ under the special Lagrangian fibration; that is, there exists $\nu_i:\partial K_i\rightarrow S^1$. Since both $\partial K_1,F(\partial K_2)$ are the boundary of a neighborhood of infinity of $X_1$ and $X_1\setminus K_1\cong X_{\mathcal{C}}\cong \check{X}_{mod}\cong \partial K_1 \times (0,1)$, there exists a vector field on $X_1\setminus K_1$ such the induced flow takes $\partial K_1$ to $F(\partial K_2)$. We will denote such diffeomorphism by $v:K_1\cong F(\partial K_2)$. 
	
	Since $S^1$ is the Eilenberg-MacLane space $K(\mathbb{Z},1)$, we have $[\partial K_1, S^1]=H^1(\partial K_1,\mathbb{Z})\cong \mathbb{Z}^2$. Restricting the model special Lagrangian fibration in $g_1^{-1}(X_{\mathcal{C}})$ with fibre class any primitive on $\partial K_1$ and possibly composing with multiple cover $S^1\rightarrow S^1$ gives $\mathbb{Z}^2$ non-homotopic maps from $\partial K_1$ to $S^1$. Notice that they all have different fibre homology classes.	
	 Therefore, two maps from $\partial K_1$ to $S^1$ are homotopic if and only if the corresponding fibre classes are homologous. Therefore, we have $\nu_1\sim \nu_2 \circ F^{-1}\circ v$ and we can modify $v$ such that $v$ sends fibres of $\nu_1$ to fibres of $\nu_2 \circ F^{-1}$, which are $2$-tori. Let $T^2$ be a fibre of $\nu_1(\partial K_1)$, then $\phi=\nu_2\circ F^{-1}\circ v\circ \nu_1^{-1}$
	falls in the mapping class group $MCG(T^2)\cong SL(2,\mathbb{Z})$. 	
	The monodromy $M$ of $\partial K\rightarrow S^1$ is conjugate to $\begin{pmatrix} 1 & b\\ 0 & 1   \end{pmatrix}$ and it commutes with $\phi$. Thus, $\phi$ is also of the form $\pm\begin{pmatrix} 1 & m\\ 0 & 1   \end{pmatrix}$ for some $m\in \mathbb{Z}$. Therefore, we may modify $F$ such that fibrewise it is given by $\phi$ on $X_2\setminus K'_2$ for large enough compact set $K_2'$. In terms of the coordinates in Section \ref{sec: Calabi}, $F'$ (after the identification $X_{\mathcal{C}}\cong \check{X}_{mod}$) is given by 
	   \begin{align} \label{mcg}
	       u\mapsto u, v\mapsto \pm v+m\frac{\mbox{Im}(v)}{\mbox{Im}(\tau(u))}. 
	   \end{align}
  Now every continuous section of $\check{X}_{mod}$ which extends to $\check{Y}_{mod}$ is of the form 
    \begin{align*}
        h(u) +\frac{a}{2\pi i}\log{u}, 
	\end{align*}  where $h(u)$ is a continuous function over $\Delta$ and $a\in \mathbb{Z}$. Straightforward calculation shows that \eqref{mcg} maps sections of $\check{Y}_{mod}$ to  sections $\check{Y}_{mod}$ and this finishes the proof of the lemma. 
\end{proof} From now on, we will replace $F$ by $F'$ in Lemma \ref{intermediate} and still denote it by $F$. 
 Recall that the second homology group of a rational elliptic surface is generated by the components of fibres and sections. The lemma implies that there exists a map $\tilde{F}^*:H^2(\check{Y}_1,\mathbb{Z})\rightarrow H^2(\check{Y}_2,\mathbb{Z})$ such that the following diagram commutes 
 \begin{align*}
       \xymatrix{H^2(\check{Y}_1,\mathbb{Z})\ar[d]\ar[r]^{\tilde{F}^*} & H^2(\check{Y}_2,\mathbb{Z}) \ar[d]  \\  H^2(X_1,\mathbb{Z}) \ar[r]^{F^*} & H^2(X_2,\mathbb{Z})  }
 \end{align*}
 and the intersection pairing is preserved. Here the vertical maps are the natural ones induced from the restriction. From Poincare duality, $\tilde{F}^*$ must be an isometry of lattices.

From \cite[Construction 5.7]{GHK}, there exists a universal family $(\mathcal{Y},\mathcal{D})$ over $\mbox{Hom}(\mbox{Pic}(\check{Y}_1),\mathbb{C}^*)$ such that $(\check{Y}_1,\check{D}_1)=(\mathcal{Y}_{1},\mathcal{D}_1)$ is the reference fibre and there exists an isomorphism of pairs $\rho:(\check{Y}_2,\check{D}_2)\cong (\mathcal{Y}_2,\mathcal{D}_2)$ with some fibre $(\mathcal{Y}_2,\mathcal{D}_2)$. Now $\tilde{F}^*$ can be decomposed as
 \begin{align*}
   \tilde{F}^*: H^2(\check{Y}_1,\mathbb{Z})=H^2(\mathcal{Y}_1,\mathbb{Z})\overset{\mbox{Par}}{\cong}H^2(\mathcal{Y}_2,\mathbb{Z})\overset{\rho^*}{\cong} H^2(\check{Y}_2,\mathbb{Z}),
 \end{align*} where $\mbox{Par}$ denotes a choice of the parallel transport via the universal family. Since $\rho:\check{Y}_2\cong \mathcal{Y}_2$ is a biholomorphism, it preserves the set of exceptional curves and positive cones. Together with the fact that $C^{++}$ is preserved under the parallel transport, we have $\tilde{F}^*(C^{++}_{\check{Y}_1})=C^{++}_{\check{Y}_2}$.
 Now from Theorem \ref{Tor}, there exists an isomorphism of pairs $h:(\check{Y}_2,\check{D})\rightarrow (\check{Y}_1,\check{D})$ such that $\tilde{F}^*\circ g=h^*$ for some $g\in W_{\check{Y}_1}$. 
 
 Next, we will show that $g$ is the identity. 
 From \cite[Theorem 3.2]{GHK}, the hyperplanes $\alpha^{\perp}, \alpha\in W_{\check{Y}_1}\cdot \Delta_{\check{Y}_1}$ divide $C^{++}_{\check{D}}$ into chambers and the Weyl group $W_{\check{Y}_1}$ acts simply transitively on the chambers. Moreover, there exists a unique chamber containing the nef cone and thus the ample cone. 
  Chambers divided by $\alpha^{\perp}$ in $H^2(\check{Y}_1)$ have disjoint image under the restriction map $\iota^*:H^2(\check{Y}_1)\rightarrow H^2(\check{X}_1)$. Indeed, if $\delta_1,\delta_2\in H^2(\check{Y}_1)$ and $\iota^*\delta_1=\iota^*\delta_2$, then from the dual of long exact sequence of \eqref{long exact seq}, we have 
 \begin{align*}
 \delta_2=\delta_1+\sum_i a_i [D_i].
 \end{align*} Thus $\delta_1,\delta_2$ fall in the same chamber because $\alpha\cdot [D_i]=0$ for all $\alpha\in W_{\check{Y}_1}\cdot \Delta_{\check{Y}_1}$. Again from the long exact sequence \eqref{long exact seq}, the image of $\iota^*$ is a hyperplane in $H^2(\check{X}_1)$. For each $\alpha\in \Delta_{\check{Y}_1}$, there is a corresponding $(-2)$-curve $C_{\alpha}$ of $\check{Y}_1$ completely falls in $\check{X}_1$. Given a compact $2$-cycle $C$ of $\check{X}$, we can associate a hyperplane $[C]^{\perp_{\check{X}_1}}$ of $H^2(\check{X}_1)$ given by 
 \begin{align*}
 [C]^{\perp_{\check{X}_1}}=\{[\omega]\in H^2(\check{X}_1)| \int_{C}[\omega]=0\}.
 \end{align*}
 Then $\iota^*(\alpha^{\perp})$ is the intersection of the hyperplane $[C_{\alpha}]^{\perp_{\check{X}_1}}$ and $[\partial_*(\epsilon)]^{\perp_{\check{X}_1}}$. Again the hyperplanes $[C_{\alpha}]^{\perp_{\check{X}_1}}$, $\alpha\in W_{\check{Y}_1}\cdot \Delta_{\check{Y}_1}$ divide $H^2(\check{X}_1)$ into chambers. There exists a unique one contains the image of the K\"ahler cone of $\check{Y}_1$, which consists of $2$-forms integrating positively on $C_{\alpha}$ for all $\alpha\in \Delta_{\check{Y}_1}$. 
 Since $F^*$ sends $[\omega_1]$ to $[\omega_2]$ and $h^*$ preserves the K\"ahler classes of $\check{Y}_1$, one must have $g$ is the identity and $\tilde{F}^*=h^*$.

When restricting to $\check{X}_1$, we have $h^*=\tilde{F}^*=F^*$.
Since $F^*[\check{\Omega}_1]=[\check{\Omega}_2]$ from the assumption and $h^*\check{\Omega}_1=c\check{\Omega}_2$ for some constant $c\in \mathbb{C}^*$, we then have $h^*\check{\Omega}_1=\check{\Omega}_2$. 
From Theorem \ref{local model} and Theorem \ref{exp decay}, the resulting K\"ahler form $\check{\omega}_i$ is exponentially decaying to a possibly non-standard semi-flat metric.
Then Theorem \ref{uniqueness} implies that $T_{\sigma}^*h^*\check{\omega}_1=\check{\omega}_2$, where $T_{\sigma}$ is a translation by a global section of $\check{X}_2$, which doesn't alter the $(2,0)$-forms. We may take $f=h\circ T_{\sigma}$ and this finishes the proof of the Torelli theorem.

\end{proof}	

\begin{rk}
	There is ongoing work by Mazzeo and Zhu \cite{MZ} that studies the Fredholm mapping properties of the Laplace operator on ALH* space with applications to Hodge theory and perturbation theory. 
\end{rk}

\end{document}